\theoremstyle{plain}
\newtheorem{theorem}{Theorem}
\newtheorem{prop}{Proposition}
\newtheorem{lemma}{Lemma}
\newtheorem{cor}{Corollary}
\theoremstyle{definition}
\newcommand{\beq}{\begin{equation}}
\newcommand{\eeq}{\end{equation}}
\newcommand{\nn}{\nonumber}
\newcommand{\CC}{\mathbb{C}}
\newcommand{\e}{\epsilon}
\newcommand{\p}{\partial}
\newcommand{\F}{\mathcal{F}}
\newcommand{\cH}{\mathcal{H}}
\newcommand{\bbT}{{\bf T}}
\def\={\; = \;}
\def\+{\; + \;}
\def\:={\; := \; }
\begin{document}

\title[On the Hodge-BGW correspondence]{On the Hodge-BGW correspondence}
\author{Di Yang, Qingsheng Zhang}
%\date{}
\begin{abstract}
We establish an explicit relationship between the partition function of certain special cubic Hodge integrals 
and the generalized Br\'ezin--Gross--Witten (BGW) partition function, 
which we refer to as the {\it Hodge-BGW correspondence}. As an application, we obtain 
an ELSV-like formula for generalized BGW correlators.
\end{abstract}

\maketitle

\section{Introduction and statements of the results}
In this paper, we establish a relationship between two celebrated partition functions:
the partition function of certain special cubic Hodge integrals and    
the generalized BGW partition function, that we will call the {\it Hodge-BGW correspondence}.

Let~$\overline{\mathcal{M}}_{g,n}$ denote the Deligne--Mumford moduli 
space of stable algebraic curves of genus~$g$ with~$n$ distinct marked points~\cite{DM}. 
%Let us recall intersection numbers of mixed $\psi$, $\kappa$, $\lambda$-class on~$\overline{\mathcal{M}}_{g,n}$.
Let $\mathcal L_p$ be the $p$th tautological line bundle on~$\overline{\mathcal{M}}_{g,n}$,  and $\mathbb{E}_{g,n}$ the Hodge bundle.
Denote by $\psi_p:=c_1(\mathcal L_p)$, $p=1,\dots,n$, the first Chern class of $\mathcal L_p$, 
and by  $\lambda_j:= c_j(\mathbb{E}_{g,n})$, $j=0, \dots, g$, the $j$th Chern class of $\mathbb E_{g,n}$. 
We also denote by $\kappa_d:=f_*(\psi_{n+1}^{d+1})$ the $\kappa$-class~\cite{AC,KK,Mum}, 
where $f: \overline{\mathcal{M}}_{g,n+1}\to \overline{\mathcal{M}}_{g,n}$ is the {\it forgetful map}. 
Recall that the {\it intersection numbers of mixed $\psi$-, $\lambda$-, $\kappa$-classes} are integrals on~$\overline{\mathcal{M}}_{g,n}$ of the form
\beq\label{hodgekappaint}
\int_{\overline{\mathcal{M}}_{g,n}} \, \psi_1^{i_1}\cdots \psi_n^{i_n} \, \lambda_1^{j_1} \cdots \lambda_g^{j_g} \,\kappa_{d_1}\cdots\kappa_{d_\ell}\,,
\eeq
where $ i_1, \dots, i_n, j_1, \dots, j_g \geq 0$, $d_1,\cdots,d_\ell \geq 1$, $\ell \geq 0$.
These integrals vanish unless
\beq\label{ddhodge}
(i_1\+\cdots\+ i_n) \+ (j_1\+2 \, j_2\+\cdots\+g \, j_g)\+(d_1\+\cdots\+d_\ell)\= 3 \, g \,-\, 3 \+ n \,. 
\eeq
When $\ell=0$, they are also called {\it Hodge integrals}.

We will be particularly interested in the cubic Hodge integrals of the following form
\beq\label{chbcy}
\int_{\overline{\mathcal{M}}_{g,n}} \, 
\Lambda(-1)^2 \, \Lambda\Bigl(\frac{1}{2}\Bigr) \, \psi_1^{i_1} \cdots \psi_n^{i_n} \,, 
\eeq
where $\Lambda(z) := \sum_{j=0}^g \lambda_j  z^j$ denotes the Chern polynomial of~$\mathbb E_{g,n}$.
These Hodge integrals belong to the class of cubic 
Hodge integrals satisfying the Calabi--Yau condition \cite{GV, LLLZ, MV, OP}.
They also appeared in the Gopakumar--Mari\~no--Vafa conjecture~\cite{GV, MV} regarding 
the Chern--Simons/string duality and in the theory of topological vertex~\cite{LLLZ, LLZ1, LLZ2}.
Recently, they play important roles in the Hodge-GUE correspondence~\cite{DLYZ1,DLYZ2,DY1}, 
which implies the ELSV-like (cf.~\cite{ELSV}) formula for even GUE and modified GUE correlators~\cite{BGF, DY3, GF}; 
see~\cite{A21-1, A21-3, Zhou0} for interesting connections to the KP hierarchy and 2D Toda lattice.
Denote by 
\beq\label{cubichodgeint}
\cH({\bf t};\e) \:= \sum_{g,n\geq0} \,\e^{2g-2}\, \sum_{i_1,\dots,i_n\geq0}\, \frac{t_{i_1}\cdots t_{i_n}}{n!}\, \int_{\overline{\mathcal{M}}_{g,n}}  \Lambda(-1)^2 \, \Lambda\Bigl(\frac{1}{2}\Bigr)\,\psi_1^{i_1}\cdots \psi_n^{i_n}
\eeq
the generating series of the cubic Hodge integrals~\eqref{chbcy}, called 
the {\it Hodge free energy associated to $\Lambda(-1)^2\Lambda(1/2)$}.
Denote by 
\beq\label{hodgepart}
Z_{\rm H}({\bf t};\e) \:= e^{\cH({\bf t};\e)}
\eeq
the partition function, and by~$\cH_g({\bf t})$ the genus~$g$ part of the Hodge free energy, i.e.,
\beq
\cH({\bf t};\e) \= \sum_{g\geq0} \, \e^{2g-2} \, \cH_g({\bf t})\,.
\eeq

We continue and recall terminologies in the BGW side. 
The BGW partition function was introduced in~\cite{BG,GW}.
In~\cite{MMS}, a {\it one parameter deformation} of this partition function, 
called the {\it generalized BGW partition function with the parameter~$N$}, 
was given as a particular generalized Kontsevich model~\cite{KMMM} (cf.~also~\cite{A,BR} for the definition). 
Let us denote this partition function by $Z_{\rm gBGW}(N, {\bf T}; \hbar)$, where 
 ${\bf T}=( T_1, T_3, T_5,\dots)$ is an infinite vector of indeterminates and~$N$ is an indeterminate. 
It is 
often normalized by the following initial condition:
\beq\label{Zgbgw1}
Z_{\rm gBGW}(N, {\bf 0}; \hbar) \;\equiv\; 1 \,. 
\eeq
The logarithm $\log Z_{\rm gBGW}(N, {\bf T}; \hbar) =: \F_{\rm gBGW}(N, {\bf T}; \hbar)$, belonging to $\CC[\hbar][[N,{\bf T}]]$,  
is called the {\it generalized BGW free energy with the parameter~$N$}.
It is known e.g.~in~\cite{A} that the power series $\F_{\rm gBGW}(N, {\bf T}; \hbar)$ has the form
\beq\label{topoexp1214}
\F_{\rm gBGW}(N, {\bf T}; \hbar) \= \sum_{l\ge1} \, \sum_{a_1,\dots,a_l\geq0} \, 
\hbar^{2|a|}\,\sum_{g=0}^{|a|+1} \, (-2)^{|a|-g+1} \, c_g(a_1,\cdots, a_l) \, N^{2|a|-2g+2} \, \frac{\prod_{p=1}^l T_{2a_p+1}}{l!} \,,
\eeq
where $c_{g}(a_1,\cdots, a_l)$ ($g\ge0$, $l\geq1$, $a_1,\dots,a_l\geq0$) are numbers, and $|a|:=\sum_{i=1}^l a_i$.
We call $c_{g}(a_1,\cdots, a_l)$ the {\it generalized BGW correlators of genus~$g$}. 
When $|a|=g-1$, the numbers $c_{g}(a_1,\cdots, a_l)$ are the celebrated {\it BGW correlators of genus~$g$}, as they are Taylor coefficients of the BGW free energy, that is, of $\F_{\rm gBGW}(0, {\bf T}; \hbar) $. The numbers $c_g(a_1,\cdots,a_l)$ vanish when $|a|<g-1$.

According to Alexandrov~\cite{A} (cf.~also~\cite{BR, DN, GN, MMS, YZ}), the power series $Z_{\rm gBGW}(N, {\bf T}; \hbar)$ 
satisfies the following Virasoro constraints:
\beq\label{eqn:vira-gBGW}
L_m^{\rm gBGW} \bigl(Z_{\rm gBGW}(N, {\bf T}; \hbar)\bigr) \= 0 \,, \quad m\geq0 \,.
\eeq
Here $L_m^{\rm gBGW}$, $m\geq0$, are operators given by 
\begin{align}
L_m^{\rm gBGW} \= &
\frac{1}{2^{2m+1}} \, \sum_{a\geq0} \, \frac{(2a+2m+1)!!}{(2a-1)!!} \, \widetilde{T}_{2a+1} \, \frac{\p}{\p T_{2a+2m+1}} \,\nn \\
&\+ \frac{\hbar^2}{2^{2m+2}} \, \sum_{a+b=m-1} \, (2a+1)!! \, (2b+1)!! \, \frac{\p^2}{\p T_{2a+1}\p T_{2b+1}} 
\+ \left(\frac{1}{16} - \frac{N^2}{4}\right) \, \delta_{m,0} \label{Lmgbgwdef}
\end{align}
with 
\beq
\widetilde{T}_{2a+1}  \=   T_{2a+1} \,-\, \delta_{a,0} \,,\quad a\geq0\,.
\eeq
Here we note that the normalizations of the independent variables $T_{2a+1}$ differ by simple factors from those of~\cite{A}.
The operators $L_m^{\rm gBGW}$ satisfy the Virasoro commutation relations: 
\beq\label{viralmbgw1216}
\bigl[L_m^{\rm gBGW}, L_n^{\rm gBGW} \bigr] \= (m-n) \, L_{m+n}^{\rm gBGW} \,. 
\eeq
As we shall see from the uniqueness of solution given in Section~\ref{section2.2}, equations \eqref{Zgbgw1}, \eqref{eqn:vira-gBGW} can be used 
as the defining equations for the generalized BGW partition function with the parameter~$N$, i.e. for $Z_{\rm gBGW}(N, {\bf T}; \hbar)$.

Following Alexandrov~\cite{A}, introduce 
\beq\label{sashaconst}
 x \:= N\, \hbar \,\sqrt{-2}\,.
\eeq
Let us define 
\beq \label{def:gbgw-part}
Z(x, \bbT; \hbar) \:= e^{B(x,\hbar)} \, Z_{\rm gBGW}\Bigl( \, \frac{x}{\hbar \,\sqrt{-2}}, \bbT; \hbar \Bigr)\,,
\eeq
where 
\beq \label{eqn:bernoulli}
B(x, \hbar) \= \frac1{\hbar^2} \biggl(\frac{x^2}{4} \, \log \Bigl(-\frac{x}{2}\Bigr) - \frac38 \, x^2\biggr) \+ \frac1{12} \, \log\Big(-\frac{x}{2}\Big)
\+ \sum_{g\geq 2} \frac{\hbar^{2g-2}}{x^{2g-2}}\frac{(-1)^g \, 2^{g-1} \, B_{2g}}{2g \, (2g-2)}
\eeq
with $B_k$ denoting the $k$th Bernoulli number. 
We call $Z(x, {\bf T}; \hbar)$ the {\it generalized BGW partition function}, and call its logarithm $\log Z(x, {\bf T}; \hbar)$  
the {\it generalized BGW free energy}, denoted by $\F(x, {\bf T};\hbar)$. 
Explicitly, by~\eqref{topoexp1214} we have
\begin{align}
\F(x,\bbT;\hbar) \= & \frac1{\hbar^2} \biggl(\frac{x^2}{4} \, \log \Bigl(-\frac{x}{2}\Bigr) - \frac38 \, x^2\biggr) \+ \frac1{12} \, \log\Big(-\frac{x}{2}\Big)
\+ \sum_{g\geq 2} \frac{\hbar^{2g-2}}{x^{2g-2}}\frac{(-1)^g \, 2^{g-1} \, B_{2g}}{2g \, (2g-2)} \nn\\
& \+ \sum_{g\geq0} \, \hbar^{2g-2} \, \sum_{l\geq1} \, \sum_{a_1,\dots,a_l\geq0 \atop |a|\geq g-1} \, c_g(a_1,\dots,a_l) \, x^{2|a|-2g+2} \, \frac{\prod_{p=1}^l T_{2a_p+1}}{l!} \,. \label{genusexpansion}
\end{align}

Inspired by the Hodge-GUE correspondence~\cite{DLYZ2, DY1}, we establish in the following theorem an explicit 
relationship between the cubic Hodge partition function associated to $\Lambda(-1)^2\Lambda(1/2)$ 
and the generalized BGW partition function. 
\begin{theorem}[Main Theorem] \label{main}
The following identity 
\begin{align}\label{mainid}
e^{\frac{A( x,\bbT)}{\hbar^2}} Z_{\rm H}\bigl({\bf t}( x, \bbT); \hbar\,\sqrt{-4}  \bigr) \=  Z( x, \bbT; \hbar) 
\end{align}
holds true in $\CC((\hbar^2))[[x+2]][[{\bf T}]]$.
Here,
\begin{align}\label{deftT}
&  t_i( x, \bbT) \= \delta_{i,0} \, x \+\delta_{i,1}\, -\, \Big(-\frac{1}{2}\Big)^{i-1}\,-\, 2\, \sum_{a\geq0} \,\Big(-\frac{2a+1}{2}\Big)^i  \;  \frac{ {T}_{2a+1} }{a!} \, , \quad i\geq 0\,,
\end{align}
and $A(x, \bbT)$ is a quadratic series given by
\beq\label{defA1215}
A(x,\bbT) \= \frac{1}{2} \, \sum_{a,b\geq0}
\, \frac{\widetilde { T}_{2a+1} \,\widetilde { T}_{2b+1}}{a! \, b! \, (a+b+1)} 
\, - \, \sum_{b\geq0} \, \frac{ x \, \widetilde { T}_{2b+1}}{b!\, (2b+1)}  \,.
\eeq
\end{theorem}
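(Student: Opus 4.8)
The plan is to prove \eqref{mainid} via the uniqueness of the solution of the generalized BGW Virasoro constraints. Set
$\widetilde Z(x,\bbT;\hbar):=e^{A(x,\bbT)/\hbar^2}\,Z_{\rm H}\bigl({\bf t}(x,\bbT);\hbar\sqrt{-4}\bigr)$,
the left-hand side of \eqref{mainid}. Since $B(x,\hbar)$ is independent of $\bbT$, the factor $e^{B}$ commutes with every operator $L_m^{\rm gBGW}$ of \eqref{Lmgbgwdef}; hence $Z=e^{B}Z_{\rm gBGW}$ satisfies \eqref{eqn:vira-gBGW} together with $Z|_{\bbT={\bf0}}=e^{B}$, and conversely these two properties determine $Z$ uniquely by the uniqueness recorded in Section~\ref{section2.2} (after setting $N=x/(\hbar\sqrt{-2})$, the inverse of \eqref{sashaconst}). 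It therefore suffices to prove that \emph{(a)} $L_m^{\rm gBGW}\widetilde Z=0$ for all $m\ge0$, and \emph{(b)} $\widetilde Z|_{\bbT={\bf0}}=e^{B(x,\hbar)}$.

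For \emph{(a)} I would first equip $Z_{\rm H}$ with Virasoro constraints. The integrals \eqref{chbcy} satisfy the Calabi--Yau condition $1/(-1)+1/(-1)+1/(1/2)=0$, so by Mumford's relation and Givental's $R$-matrix action (equivalently, the reduction of $\lambda$-classes to $\psi$- and $\kappa$-classes) the Hodge free energy $\cH$ differs from the Witten--Kontsevich free energy by a degree-triangular change of the $\psi$-times and an at most quadratic correction; in particular $Z_{\rm H}$ is a Witten--Kontsevich (KdV) tau function in suitably shifted times. Transporting the standard Witten--Kontsevich operators $\mathcal L_m^{\rm WK}$ ($m\ge-1$) through this shift yields operators $\mathcal L_m^{\rm H}$ ($m\ge-1$) annihilating $Z_{\rm H}\bigl({\bf t};\hbar\sqrt{-4}\bigr)$, which I would write out explicitly in the normalization of \eqref{cubichodgeint} with $\e=\hbar\sqrt{-4}$. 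These are exactly the Virasoro constraints used on the Hodge side of the Hodge--GUE correspondence.

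Next I transport and match. The chain rule applied to \eqref{deftT} gives $\p/\p T_{2a+1}=-\tfrac{2}{a!}\sum_{i\ge0}(-\tfrac{2a+1}{2})^{i}\,\p/\p t_i$, so each $L_m^{\rm gBGW}$ becomes, in the $t$-variables, an infinite combination of the $\p/\p t_i$ and of multiplications; conjugating by $e^{A/\hbar^2}$ turns the linear and quadratic parts of $A$ into the shift $\widetilde T_{2a+1}=T_{2a+1}-\delta_{a,0}$ and into the constant $(1/16-N^2/4)\delta_{m,0}$. The key step is to show that $e^{-A/\hbar^2}L_m^{\rm gBGW}e^{A/\hbar^2}$, so rewritten, lies in the linear span of $\{\mathcal L_k^{\rm H}\}_{k\ge-1}$, whence it annihilates $Z_{\rm H}$ and \emph{(a)} follows. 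Concretely this reduces to a family of generating-function identities relating the double factorials $(2a+2m+1)!!/(2a-1)!!$ of \eqref{Lmgbgwdef} to the monomial weights $(-\tfrac{2a+1}{2})^{i}$ of \eqref{deftT}; the natural device is to resum $\sum_a (-\tfrac12)^a/a!\,(\cdots)$ against $(1-2z)^{-1/2}$, which converts the factorials into the required powers.

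For \emph{(b)}, at $\bbT={\bf0}$ one has $\widetilde T_{2a+1}=-\delta_{a,0}$, so $A(x,{\bf0})=x+\tfrac12$, and the claim becomes the genus-by-genus identity $\cH\bigl({\bf t}(x,{\bf0});\hbar\sqrt{-4}\bigr)=B(x,\hbar)-(x+\tfrac12)/\hbar^2$, a direct evaluation of the Hodge free energy at the distinguished times $t_i(x,{\bf0})=\delta_{i,0}\,x+\delta_{i,1}-(-\tfrac12)^{i-1}$ whose genus $\ge2$ part must reproduce the Bernoulli tail of \eqref{eqn:bernoulli}. I expect the principal obstacle to be the operator matching of the preceding paragraph: verifying, uniformly in $m$, that the non-triangular infinite substitution \eqref{deftT} intertwines the two Virasoro presentations, with the anomaly $\tfrac1{16}-\tfrac{N^2}{4}$ and the affine shifts emerging correctly; the mismatch of ranges ($m\ge-1$ versus $m\ge0$) and the constant-map/Bernoulli data carried by $B$ are the points demanding the most care. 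Once \emph{(a)} and \emph{(b)} are established, uniqueness forces $\widetilde Z=Z$ and hence \eqref{mainid}.
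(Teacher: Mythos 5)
Your overall architecture coincides with the paper's: conjugate the generalized BGW Virasoro operators by $e^{A/\hbar^2}$ and the change of times \eqref{deftT}, show the result annihilates $Z_{\rm H}$, invoke the uniqueness of Lemma~\ref{cajZ}, and finish by matching the two sides at ${\bf T}={\bf 0}$. Step \emph{(a)} is essentially the paper's Proposition~\ref{conjlm}: there the conjugated operator is identified with the explicit infinite combination $\widetilde L_m^{\rm H}=-\sum_{k\ge-1}\frac{(-m)^{k+1}}{(k+1)!}L_k^{\rm H}$ of \eqref{wlmdef}, and — a device you do not mention but which makes "uniformly in $m$" painless — only the cases $m=0,1,2$ are checked by hand, the rest following from the Virasoro commutation relations \eqref{viralmbgw1216} and \eqref{viraHtilde1216}. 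Your generating-function resummation against $(1-2z)^{-1/2}$ is the right mechanism for those low cases (it is what produces the ratio $\Phi(z+m)/\Phi(z)$ computations in the paper), so \emph{(a)} is on track.

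The genuine gap is in step \emph{(b)}. You reduce the theorem to the identity $\cH\bigl({\bf t}(x,{\bf 0});\hbar\sqrt{-4}\bigr)=B(x,\hbar)-(x+\tfrac12)/\hbar^2$ and call it "a direct evaluation." It is not: the evaluation point has $t_0=x+2$, $t_1=0$ and $t_i=-(-\tfrac12)^{i-1}$ for all $i\ge2$, i.e., infinitely many nonzero times, and for $g\ge2$ the statement is equivalent to the Bernoulli-number identity \eqref{elsvgen21215} (essentially the Kazarian--Norbury identity \eqref{eqn:kw-bernu}), which was only conjectural before this paper and certainly does not follow by inspection. The paper handles genus $0$ and $1$ from the closed formulas \eqref{explicitv1216}--\eqref{cH1exp}, but for $g\ge2$ it needs two further inputs you have not supplied: the dilaton equation (Lemma~\ref{dilatonlemma}) to force $R_g(x)=r_g/x^{2g-2}$, and the Hodge--FVH correspondence, which yields the functional equation \eqref{Bogo1216} and hence $\p_x(e^{W})=-\tfrac12$, pinning down $r_g=\frac{(-1)^g2^{g-1}B_{2g}}{2g(2g-2)}$. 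Without an argument of this kind (or an independent proof of \eqref{elsvgen21215}), your proof establishes only that the two sides of \eqref{mainid} agree up to an undetermined factor in $\CC((\hbar^2))[[x+2]]$. You have also misplaced the difficulty: the operator matching in \emph{(a)} is lengthy but mechanical, whereas the ${\bf T}={\bf 0}$ normalization is where new input is required.
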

\noindent The proof of this theorem is in Section~\ref{section3}. 
We call~\eqref{mainid} the {\it Hodge-BGW correspondence}.

Let us now present by the following proposition an application of the Hodge-BGW correspondence, expressing 
an arbitrary generalized BGW correlators of genus $g$ in terms of intersection numbers on the moduli space of curves.  
\begin{prop}\label{thm:gbgw-elsv}
For $g\geq 0$, $l\geq 1$, and for 
non-negative integers $a_1,\cdots, a_l$ such that $|a|\geq g-1$, the generalized BGW correlators of genus~$g$ 
are related to intersection numbers by 
\begin{align}
	&c_{g}(a_1,\cdots,a_l) \, \nn\\
	\= & \frac{(-1)^{g-1+l} \, 2^{2g-2+l}}{(2|a|-2g+2)! \, \prod_{p=1}^la_p!} \, \int_{\overline{\mathcal{M}}_{g,l+2|a|-2g+2}}  \Lambda(-1)^2 \, \Lambda\Bigl(\frac{1}{2}\Bigr) \, e^{\sum_{d\geq 1}\frac{(-1)^{d-1}\, \kappa_{d}}{ 2^{d}\, d }}
	\prod_{p=1}^l \frac{1}{1+\frac{2a_p+1}{2}\psi_p}\, . \label{elsvgen1215}
\end{align}
Moreover, for $g\geq2$, we have
\begin{align}
 \int_{\overline{\mathcal{M}}_{g,0}}  \Lambda(-1)^2 \, \Lambda\Bigl(\frac{1}{2}\Bigr) \, 
e^{\sum_{d\geq 1}\frac{(-1)^{d-1}\, \kappa_{d}}{ 2^{d}\, d }} \= - \, \frac{B_{2g}}{2g \, (2g-2) \, 8^{g-1}} \,. \label{elsvgen21215}
\end{align}
\end{prop}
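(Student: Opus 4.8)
The plan is to extract both identities in Proposition~\ref{thm:gbgw-elsv} directly from the Main Theorem~\eqref{mainid} by comparing coefficients. The correspondence states $e^{A/\hbar^2} Z_{\rm H}({\bf t}(x,\bbT); \hbar\sqrt{-4}) = Z(x,\bbT;\hbar)$; taking logarithms gives
\begin{align}\label{plan:logeq}
\frac{A(x,\bbT)}{\hbar^2} \+ \cH\bigl({\bf t}(x,\bbT); \hbar\sqrt{-4}\bigr) \= \F(x,\bbT;\hbar) \,.
\end{align}
The right-hand side has the explicit genus expansion~\eqref{genusexpansion}, whose $T$-dependent part is the generating series of the generalized BGW correlators $c_g(a_1,\dots,a_l)$. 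The left-hand side is built from the Hodge free energy $\cH$, which is an explicit sum of cubic Hodge integrals~\eqref{chbcy}. So first I would substitute the definition~\eqref{deftT} of ${\bf t}(x,\bbT)$ into $\cH$ and expand in powers of $\bbT$, then match the coefficient of $\hbar^{2g-2} x^{2|a|-2g+2}\prod_p T_{2a_p+1}/l!$ on both sides.

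The key computational step is to track how the linear change of variables $t_i \mapsto t_i(x,\bbT)$ feeds into the $\psi$-class insertions. Each $t_i$ couples to $\psi^i$ in~\eqref{cubichodgeint}, and the $T_{2a+1}$-dependent piece of $t_i$ is $-2\,(-(2a+1)/2)^i/a!$. Extracting one factor of $T_{2a+1}$ therefore replaces a marked point by the insertion $-2\sum_{i\geq0}(-(2a+1)/2)^i \psi^i/a! = -\tfrac{2}{a!}\cdot \frac{1}{1+\frac{2a+1}{2}\psi}$, which is exactly the summand $\frac{1}{1+\frac{2a_p+1}{2}\psi_p}$ appearing in~\eqref{elsvgen1215}. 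The remaining $x$- and $\delta_{i,0},\delta_{i,1}$-dependent terms of $t_i(x,\bbT)$ populate the extra marked points with a fixed ``background'' insertion. I would compute that the $x$-part $\delta_{i,0}x + \delta_{i,1} - (-\tfrac12)^{i-1}$, summed over the extra points and after using the dilaton/string-type pushforward along the forgetful map, exponentiates into the $\kappa$-class factor $e^{\sum_{d\geq1}(-1)^{d-1}\kappa_d/(2^d d)}$. The combinatorial factor $(2|a|-2g+2)!$ counts the extra marked points carrying the background insertion, and the power of $2$ and the sign $(-1)^{g-1+l}$ assemble from the $-2$ per special point, the Hodge-side $\e = \hbar\sqrt{-4}$ rescaling, and the $(-1/2)^{\cdots}$ factors.

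The translation from $\psi$-insertions at the background points to a $\kappa$-class is the technical heart, and I expect it to be the main obstacle. The mechanism is the standard relation between $\kappa$-classes and pushforwards of $\psi$-powers along forgetful maps: a marked point carrying $\sum_i c_i \psi^i$ with a specified generating coefficient, when pushed down, contributes a $\kappa$-class exponential. Concretely one uses that adding marked points with insertions $\sum_{d\geq1} s_d \psi^{d+1}$ and integrating out generates $\exp(\sum_d s_d \kappa_d)$ up to the usual corrections from the string/dilaton equations and the comparison of $\psi$-classes before and after pulling back. Here the coefficient series is dictated by $-(-1/2)^{i-1}$, which I would need to resum carefully into $\sum_{d\geq1}(-1)^{d-1}/(2^d d)\,\kappa_d$; getting the precise normalization, including the role of the $\delta_{i,0}x+\delta_{i,1}$ terms and the Calabi--Yau Hodge factor $\Lambda(-1)^2\Lambda(1/2)$ commuting through the pushforward, is where the bookkeeping is delicate.

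Finally, for the genus-$g$, $n=0$ evaluation~\eqref{elsvgen21215}, I would specialize~\eqref{plan:logeq} to $\bbT = {\bf 0}$, so that ${\bf t}(x,{\bf 0})$ retains only its $x$-dependent background part and $A(x,{\bf 0})=0$. The left-hand side then becomes a pure genus-$g$ Hodge integral over $\overline{\mathcal{M}}_{g,0}$ with the $\kappa$-exponential insertion, while the right-hand side is the explicit Bernoulli series in~\eqref{eqn:bernoulli}--\eqref{genusexpansion}. Matching the coefficient of $\hbar^{2g-2}$ at $\bbT={\bf 0}$ (equivalently the constant-in-$T$ part of~\eqref{genusexpansion}, namely $\frac{\hbar^{2g-2}}{x^{2g-2}}\frac{(-1)^g 2^{g-1}B_{2g}}{2g(2g-2)}$) against the Hodge integral, rescaled by $\e = \hbar\sqrt{-4}$, isolates the value $-B_{2g}/(2g(2g-2)8^{g-1})$. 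This second part should follow almost formally once the first part is established, since it is just the $l=0$ (purely background) term of the same identity.
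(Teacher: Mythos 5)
Your proposal follows essentially the same route as the paper: take logarithms of the correspondence~\eqref{mainid}, substitute~\eqref{deftT} so that each $T_{2a+1}$ produces the insertion $-\frac{2}{a!}\,\frac{1}{1+\frac{2a+1}{2}\psi}$ while the remaining terms resum to the background insertion $x+2+\frac{\psi^2}{2+\psi}$, extract the coefficient of $(x+2)^{2|a|-2g+2}$ (the natural expansion variable in $\CC[[x+2]]$, rather than powers of $x$), and convert the $\psi$-decorated forgotten points into $e^{\sum_{d\geq1}(-1)^{d-1}\kappa_d/(2^d d)}$ via the forgetful-map pushforward, with the $l=0$ specialization giving the Bernoulli identity. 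The one clarification worth making in your bookkeeping is that the two pieces of the background insertion play different roles: the constant $(x+2)$-part supplies the $2|a|-2g+2$ extra marked points that survive in $\overline{\mathcal{M}}_{g,l+2|a|-2g+2}$ together with the factorial $(2|a|-2g+2)!$, whereas only the $\frac{\psi^2}{2+\psi}$-part is pushed down and exponentiated into $\kappa$-classes.
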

\noindent The proof is given in Section~\ref{section4}.

The following corollary follows straightforwardly from Proposition~\ref{thm:gbgw-elsv}.
\begin{cor}\label{corbgw}
For $g\geq 0$, $l\geq 1$, and for 
non-negative integers $a_1,\cdots, a_l$ such that $|a|=g-1$, the BGW correlators of genus~$g$ 
admit the ELSV-like formula:
\begin{align}
	c_{g}(a_1,\cdots,a_l) 
	\=  \frac{(-1)^{g-1+l} \, 2^{2g-2+l}}{\prod_{p=1}^la_p!} \, \int_{\overline{\mathcal{M}}_{g,l}} 
	\Lambda(-1)^2 \, \Lambda\Bigl(\frac{1}{2}\Bigr) \, e^{\sum_{d\geq 1}\frac{(-1)^{d-1}\, \kappa_{d}}{ 2^{d}\, d }} \, 
	\prod_{p=1}^l \frac{1}{1+\frac{2a_p+1}{2}\psi_p} \, . \label{eqn:bgw-elsv}
\end{align}
\end{cor}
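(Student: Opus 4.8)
The plan is to obtain Corollary~\ref{corbgw} as an immediate specialization of Proposition~\ref{thm:gbgw-elsv}. Recall from the discussion following~\eqref{topoexp1214} that the BGW correlators of genus~$g$ are precisely the generalized BGW correlators $c_g(a_1,\dots,a_l)$ subject to the constraint $|a|=g-1$. Thus it suffices to set $|a|=g-1$ in the general ELSV-like formula~\eqref{elsvgen1215} and simplify the resulting expression.

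The key arithmetic observation is that when $|a|=g-1$ the exponent $2|a|-2g+2$ vanishes. Substituting this into~\eqref{elsvgen1215} produces three simplifications that I would carry out in order. First, the factorial $(2|a|-2g+2)!$ in the denominator becomes $0!=1$, so that factor disappears. Second, the moduli space appearing in the integral, namely $\overline{\mathcal{M}}_{g,\,l+2|a|-2g+2}$, collapses to $\overline{\mathcal{M}}_{g,l}$, matching the target space in~\eqref{eqn:bgw-elsv}. Third, the sign-and-power-of-two prefactor $(-1)^{g-1+l}\,2^{2g-2+l}$, the product $\prod_{p=1}^l a_p!$ in the denominator, and the integrand $\Lambda(-1)^2\,\Lambda(1/2)\,e^{\sum_{d\geq1}(-1)^{d-1}\kappa_d/(2^d d)}\prod_{p=1}^l (1+\tfrac{2a_p+1}{2}\psi_p)^{-1}$ are all unaffected by the substitution and carry over verbatim.

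There is no genuine obstacle here, since Proposition~\ref{thm:gbgw-elsv} is already stated for the full range $|a|\geq g-1$, which includes the boundary case $|a|=g-1$ at hand; the only point requiring care is the bookkeeping that $2|a|-2g+2=0$ simultaneously trivializes the factorial and fixes the number of marked points to $l$. Assembling these observations yields exactly the formula~\eqref{eqn:bgw-elsv}, which completes the proof.
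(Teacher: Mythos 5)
Your proposal is correct and matches the paper's intent exactly: the paper states that the corollary "follows straightforwardly from Proposition~\ref{thm:gbgw-elsv}", and the specialization $|a|=g-1$, which makes $2|a|-2g+2=0$ so that $(2|a|-2g+2)!=1$ and the moduli space becomes $\overline{\mathcal{M}}_{g,l}$, is precisely that straightforward step.
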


The rest of the paper is organized as follows. In Section~\ref{section2} we review in more details about  
the Hodge partition function and the generalized BGW partition function. In Section~\ref{section3} we prove Theorem~\ref{main}. 
In Section~\ref{section4} we prove Proposition~\ref{thm:gbgw-elsv}.  
Three more applications of the Hodge-BGW correspondence are given in Appendix~\ref{appa}.

\section{Review on Hodge integrals and generalized BGW correlators}\label{section2}

\subsection{$\psi$-class intersection numbers and cubic Hodge integrals}
When $j_1,\dots,j_g$ and~$\ell$ in~\eqref{hodgekappaint} all equal~0, 
the integrals in~\eqref{hodgekappaint} are also called the {\it $\psi$-class intersection numbers}. It was conjectured by  
Witten~\cite{W}, and proved by Kontsevich~\cite{K} that 
the partition function of the $\psi$-class intersection numbers 
is a particular tau-function for the Korteweg--de Vries (KdV) hierarchy, now 
known as the Witten--Kontsevich theorem. To be precise, let 
$Z_{\rm WK}({\bf t}; \e)$ denote this partition function:
\beq\label{zwkdef}
Z_{\rm WK}({\bf t}; \e) \:= \exp\left(\sum_{g\geq 0} \e^{2g-2} \, \sum_{n\geq 0} \, \frac1{n!} \, \sum_{i_1, \dots, i_n\geq 0} \, 
\int_{\overline{\mathcal{M}}_{g, n}} \, \psi_1^{i_1} \cdots \psi_n^{i_n} \, t_{i_1}\cdots t_{i_n}\right) \,.
\eeq
Then $u=u_{\rm WK}({\bf t}; \e):=\e^2 \p_{t_0}^2 \bigl(\log Z_{\rm WK}({\bf t}; \e)\bigr)$ satisfies the KdV hierarchy:
\beq\label{kdv1214}
\frac{\p u}{\p t_i} \= \frac1{(2i+1)!!} \, \biggl[\Bigl(L^{\frac{2i+1}2}\Bigr)_+,L\biggr] \,, \quad i\geq0\,,
\eeq
where $L:=\e^2 \p_{t_0}^2 + 2 \, u$ is the Lax operator of the KdV hierarchy (cf.~e.g.~\cite{Dickey}). Moreover, $Z_{\rm WK}({\bf t}; \e)$ satisfies 
the dilaton and string equations
\begin{align}
& \sum_{i=0}^\infty t_i \, \frac{\p Z_{\rm WK}({\bf t}; \e)}{\p t_i} \+ \e \, \frac{\p Z_{\rm WK}}{\p \e} \+ \frac1{24} \, Z_{\rm WK}({\bf t}; \e) 
\= \frac{\p Z_{\rm WK}({\bf t}; \e)}{\p t_1} \,,  \label{dilatoneq}\\
& \sum_{i=0}^\infty t_{i+1} \frac{\p Z_{\rm WK}({\bf t}; \e)}{\p t_i} \+ \frac{t_0^2}{2 \, \e^2} \, Z_{\rm WK} \= \frac{\p Z_{\rm WK}({\bf t}; \e)}{\p t_0} \, . \label{stringeq}
\end{align}
(For more about a KdV tau-function cf.~\cite{Dickey, DYZ, DZ-norm}.)

According to Dijkgraaf, Verlinde, Verlinde~\cite{DVV}, the Witten--Kontsevich theorem can be equivalently formulated 
as follows: the power series $Z_{\rm WK}({\bf t}; \epsilon)$ satisfies the infinite family of linear equations
\beq \label{Vira-WK}
L_k^{\rm WK} \bigl(\e^{-1} \,\tilde {\bf t}, \e \, \p/\p {\bf t}\bigr) \, \Bigl(Z_{\rm WK}({\bf t}; \epsilon)\Bigr) \= 0 \,, \quad k\geq -1 \,,
\eeq
where $\tilde t_i:=t_i-\delta_{i,1}$, and $L_k^{\rm WK}=L_k^{\rm WK}\bigl( \e^{-1} \, \tilde {\bf t}, \e \, \p/\p {\bf t}\bigr)$, $k\geq -1$, are given by
\begin{align}
& L^{\rm WK}_{-1} \= \sum_{i\geq 1} \, \tilde t_i \, \frac{\p}{\p t_{i-1}} \+ \frac{t_0^2}{2 \, \epsilon^2} \,, \label{virakdvintro1} \\
& L^{\rm WK}_0 \= \sum_{i\geq 0} \, \frac{2i+1}{2} \, \tilde t_i \, \frac{\p}{\p t_{i}} \+ \frac{1}{16} \,,  \label{virakdvintro2}\\
& L^{\rm WK}_k \= \frac{\epsilon^2}{2} \, \sum_{i+j=k-1} \, \frac{(2i+1)!! \, (2j+1)!!}{2^{k+1}}
\frac{\p^2}{\p t_i \p t_j} \+ \sum_{i\geq 0} \, \frac{(2i+2k+1)!!}{2^{k+1}\, (2i-1)!!} \, \tilde t_i \, \frac{\p}{\p t_{i+k}} \,, 
\quad k\geq 1 \,. \label{virakdvintro3}
\end{align}
These operators $L_k^{\rm WK}$ satisfy the following Virasoro commutation relations:
\beq
\Bigl[L^{\rm WK}_k, L^{\rm WK}_l \Bigr] \= (k-l) \, L^{\rm WK}_{k+l}, \quad \forall \, k,l\geq -1 \, . \label{viracommkdv}
\eeq
Equations~\eqref{Vira-WK} are called the {\it Virasoro constraints} for $Z_{\rm WK}({\bf t}; \epsilon)$. Clearly, the $L_{-1}^{\rm WK}$-constraint 
coincides with the string equation~\eqref{stringeq}.

The Hodge partition function defined by \eqref{cubichodgeint}, \eqref{hodgepart} can be obtained 
from the Witten--Kontsevich partition function via the Faber--Pandharipande formula~\cite{FP}:
\beq\label{fphwk}
Z_{\rm H} ({\bf t}; \e) \= \exp\biggl(\sum_{j = 1}^\infty  \frac{(2^{-2j}-1) \, B_{2j}}{j \, (2j-1)} \, D_j\bigl(\e^{-1} \, \tilde {\bf t}, \e \, \p/\p {\bf t}\bigr)\biggr)\, \bigl(Z_{WK}({\bf t}; \e)\bigr) \,,
\eeq
where  
\beq\label{dj}
D_j\bigl(\e^{-1} \, \tilde {\bf t}, \e \p/\p {\bf t}\bigr) \:= - \, \sum_{i\geq 0} \, \tilde t_i \, \frac{\p }{\p t_{i+2j-1}} 
\+ \frac{\e^2}2 \, \sum_{a=0}^{2j-2} \, (-1)^a \, \frac{\p^2 }{\p t_a \p t_{2j-2-a}} \,,\quad j\geq 1\,.
\eeq
It satisfies the following {\it dilaton equation}:
\beq
\widetilde L^{\rm H}_{\rm dilaton} (Z_{\rm H}({\bf t};\e)) \= 0\,,
\label{dilatoneqhodge}\\
\eeq
where $\widetilde L^{\rm H}_{\rm dilaton}$ is the linear operator defined by 
\beq
\widetilde L^{\rm H}_{\rm dilaton} \:= \sum_{i\geq 0} \,\tilde t_{i} \, \frac{\p}{\p t_{i}} \+ \epsilon\,\frac{\p }{\p \epsilon}\+\frac{1}{24} \,.
\eeq

Following~\cite{Zhou1} define the operators $L_k^{\rm H}\bigl(\e^{-1} \, \tilde {\bf t}, \e \, \p/\p {\bf t}\bigr)$ by 
\begin{equation}
L_k^{\rm H}\bigl(\e^{-1} \, \tilde {\bf t}, \e \, \p/\p {\bf t}\bigr) \= e^{G}  \circ L_k^{\rm WK} \bigl(\e^{-1} \, \tilde {\bf t}, \e \, \p/\p {\bf t}\bigr) \circ e^{- G}\,, \quad k\ge -1\,,\label{deflmcubic}
\end{equation}
where
\beq
G \:= \sum_{j = 1}^\infty \, \frac{(2^{-2j}-1) \, B_{2j}}{j \, (2j-1)}  \, D_j\bigl(\e^{-1} \, \tilde {\bf t}, \e \, \p/\p {\bf t}\bigr)\,.
\eeq
Then we have
\beq
\Bigl[L_k^{\rm H}, L_l^{\rm H}\Bigr] \= (k-l) \, L_{k+l}^{\rm H}\,,\quad \forall\,k,l\geq -1 \,, 
\eeq
and moreover, 
\beq\label{vira-Hodge}
L_k^{\rm H} \bigl( \e^{-1}\,\tilde{\bf t}, \e\, \p/\p{\bf t} \bigr) \, \bigl(Z_{\rm H}({\bf t};\e)\bigr) \= 0\,,\quad \forall\, k\geq -1\,.
\eeq

A powerful tool for manipulating Virasoro type operators was introduced by Givental~\cite{Givental}. Let 
us give a short review as it will be used in Section~\ref{section3}.  Convention of the notations will follow from those of~\cite{DLYZ2}.
Denote by $\mathcal{V}$ the space of Laurent polynomials in~$z$ with coefficients in~$\mathbb{C}$.  
On $\mathcal{V}$ there defines a symplectic bilinear form $\omega$:
$$
\omega(f,g) \:= - \, {\rm Res}_{z=\infty } \, f(-z) \, g(z) \, \frac{dz}{z^2} \= - \, \omega(g,f) \,, \quad \forall \,  f,g\in \mathcal{V} \,. 
$$
The pair $(\mathcal{V},\omega)$ is called the {\it Givental symplectic space}. For any $f\in \mathcal{V}$, write
$$
f \= \sum_{i\geq 0} \, q_i \, z^{-i} \+ \sum_{i\geq 0} \, p_i \, (-z)^{i+1} \,.
$$
Then $\{q_i,\,p_i \mid i\geq0\}$ gives the Darboux coordinates for $(\mathcal{V},\omega)$. 

For any infinitesimal symplectic transformation~$A$ on $(\mathcal{V},\omega)$,
define the {\it Hamiltonian associated to~$A$} by
$$H_{A}(f) \= \frac12 \, \omega(f, A (f) ) \= - \, \frac12 \, {\rm Res}_{z=\infty } \, f(-z) \, A (f(z)) \, \frac{dz}{z^2} \, .$$
This Hamiltonian is a {\it quadratic} function on~$\mathcal{V}$, and its quantization is defined via
$$ \widehat{p_i p_j} \= \e^2 \, \frac{\p^2}{\p q_i \p q_j} ,\quad \widehat{p_i q_j} \= q_j \, \frac{\p}{\p q_i} , \quad 
\widehat{q_i q_j} \= \frac1{\e^2} \, q_i q_j \,. $$
Denote the quantization of $H_{A}$ by~$\widehat{A}$. For two infinitesimal symplectic transformations
$A,B$, we have 
$$\Bigl[\widehat{A} , \widehat{B}\Bigr] \= \widehat{[A, B]} \+ \mathcal{C}(H_A , H_B) \,, $$
where $\mathcal{C}$ is the $2$-cocycle term satisfying
$$ 
\mathcal{C}(p_ip_j, q_k q_l) \= - \, \mathcal{C}(q_k q_l, p_i p_j) \= \delta_{i,k} \, \delta_{j,l} \+ \delta_{i,l} \, \delta_{j,k} \,, 
$$
and $\mathcal{C}=0$ for all other pairs of quadratic monomials of $p,q$.
 
Following Givental~\cite{Givental}, define the operators $\ell_k$ by
\beq
\ell_k \= (-1)^{k+1} \, z^{3/2} \circ \p_z^{k+1} \circ z^{-1/2} \,, \quad k\geq -1 \, .
\eeq
It is shown in~\cite{Givental} that 
$\ell_k$ are infinitesimal symplectic transformations on~$\mathcal{V}$, 
and their quantizations coincide with 
the Virasoro operators~\eqref{virakdvintro1}--\eqref{virakdvintro3}. More precisely, 
\beq\label{kdvviragivental}
L_k^{\rm WK} \bigl( \e^{-1}\, \tilde{\bf t}, \e \,\p/\p{\bf t}\bigr) \= \widehat{\ell_k}\big|_{q_i\mapsto \tilde t_i, \p_{q_i} 
\mapsto \p_{t_i}, i\geq 0} \+ \frac{\delta_{k,0}}{16} \,, \quad k\geq -1 \,.
\eeq
Givental~\cite{Givental} also shows that the quantizations of 
the multiplication operators $z^{1-2j}$, $j\geq 1$, coincide with the operators 
$D_j$, $j\geq 1$, defined in~\eqref{dj}, i.e.,
\beq\label{djgivental}
D_j \= \widehat{z^{1-2j}}\big|_{q_i\mapsto\tilde t_i, \p_{q_i} \mapsto \p_{t_i}, i\geq 0} \,.
\eeq
Let 
\beq
\phi(z) \:= \sum_{k=1}^\infty \, \frac{(2^{-2k}-1) \, B_{2k}}{k \, (2k-1)} \, \frac{1}{z^{2k-1}} \,.
\eeq
Clearly, $\phi(z)$ gives an infinitesimal symplectic transformation on~$\mathcal{V}$. Define $\Phi(z):=e^{\phi(z)}$. 
The quantization $\widehat\Phi$ of the symplectomorphism $f(z)\mapsto \Phi(z) f(z)$ 
is defined by 
\beq\label{defbigPhi}
\widehat{\Phi} \:= e^{\widehat{\phi(z)}}\big|_{q_i\mapsto \tilde t_i, \, \p_{q_i} \mapsto \p_{t_i}, \, i\geq 0} \,.
\eeq

We see from~\eqref{dj}, \eqref{djgivental}, \eqref{defbigPhi} that $\widehat{\Phi}=e^G$. 
So the operators~$L_k^{\rm H}\bigl(\e^{-1}\,\tilde {\bf t}, \e\, \p/\p {\bf t}\bigr)$ defined in~\eqref{deflmcubic} have the expressions 
$$
L_k^{\rm H} \bigl( \e^{-1}\,\tilde {\bf t}, \e\, \p/\p{\bf t}\bigr) \= \widehat\Phi \circ L_k^{\rm WK} \, \bigl( \e^{-1}\,\tilde {\bf t}, \e\, \p/\p{\bf t}\bigr) \circ \widehat{\Phi}^{-1} \,, \quad k\geq -1 \,.
$$
Thus by using~\eqref{kdvviragivental} we obtain
\beq \label{tobesim}
L_k^{\rm H} \bigl(\e^{-1}\,\tilde {\bf t}, \e \,\p/\p{\bf t}\bigr) \= 
\left. \left[\widehat{\Phi}\circ\left(\widehat{\ell_k} 
+ \frac{\delta_{k, 0}}{16}\right)\circ \widehat{\Phi}^{-1} \right]\right|_{q_i\mapsto \tilde t_i, \, \p_{q_i} \mapsto \p_{t_i},\,i\geq 0} , \quad k\geq -1 \,.
\eeq
Therefore, as in~\cite{DLYZ2}, we have
\beq\label{LkHgivental}
L_k^{\rm H} \bigl(\e^{-1}\,\tilde {\bf t}, \e\, \p/\p{\bf t}\bigr) \=  
\widehat{\Phi}_k \big|_{q_i\mapsto \tilde t_i, \, \p_{q_i} \mapsto \p_{t_i},\,i\geq 0} \+ \frac{\delta_{k,0}}{16} \,-\, \frac{ \delta_{k, -1} }{16}\,,\quad k\geq -1 \,,
\eeq
where $\Phi_k(z)=\Phi(z) \circ \ell_k \circ \Phi(z)^{-1}$.

\subsection{The generalized BGW partition function}\label{section2.2}

It is proved in~\cite{BR,MMS} 
that the generalized BGW partition function with the parameter~$N$ (i.e. the power series $Z_{\rm gBGW}(N,\bbT;\hbar)$ from the Introduction) 
is a {\it tau-function} of the KdV hierarchy uniquely specified by~\eqref{Zgbgw1} 
and the $m=0$ case of~\eqref{eqn:vira-gBGW}; see~also~\cite{A, DN, DYZ}.
In particular, the power series $u=u_{\rm gBGW}(N,\bbT;\hbar)=\hbar^2 \p_{T_1}^2 \bigl(\log Z_{\rm gBGW}(N,\bbT;\hbar)\bigr)$ satisfies the following equations:
\beq\label{KdVThbar}
\frac{\p u}{\p T_{2a+1}} \= \frac1{(2a+1)!!} \, \biggl[\Bigl(L^{\frac{2a+1}2}\Bigr)_+,L\biggr] \,, \quad a\geq0\,.
\eeq
Here $L=\hbar^2\p_{T_1}^2+2u$. The initial value of this power series is given as follows~\cite{A,DYZ}:
\beq\label{initialugBGW202218}
u_{\rm gBGW}(N, T_1,T_3=T_5=\cdots =0;\hbar) \=\hbar^2\,\frac{\frac{1}{8}-\frac{N^2}{2}}{(1-T_1)^2}\,.
\eeq
Using these properties, let us give a new proof that $\F_{\rm gBGW}(N,\bbT;\hbar)$ has the form given by~\eqref{topoexp1214}. 
Indeed, when $l\geq 2$, this statement follows from the tau-structure of the KdV hierarchy~\cite{Dickey,DYZ,DZ-norm}. For $l=1$, the statement then follows from the $m=0$ case of~\eqref{eqn:vira-gBGW}.

As we recall from Introduction, with Alexandrov's variable~$x$ (see~\eqref{sashaconst}), one  
recognizes that the expansion~\eqref{genusexpansion} for the generalized BGW free energy $\F(x, {\bf T}; \hbar)$ 
is a genus expansion, and it is convenient to write  
\beq
\F(x, {\bf T}; \hbar) \;=:\; \sum_{g\geq0} \, \hbar^{2g-2} \, \F_{g}(x, {\bf T}) \,.
\eeq
We call $\F_g(x, {\bf T})$ the {\it genus~$g$ part of the generalized BGW free energy}. 
The derivatives of 
$\F(x, {\bf T}; \hbar)$ at ${\bf T}={\bf 0}$
\beq
\left.\frac{\p^l \F(x, {\bf T}; \hbar)}{\p T_{2a_1+1} \dots \p T_{2a_l+1}}\right|_{{\bf T}={\bf 0}} \; =: \; 
\langle\sigma_{2a_1+1}\cdots \sigma_{2a_l+1}\rangle(x,\hbar)
\eeq
are called the {\it generalized BGW correlators}, where $l,a_1,\dots,a_l\geq0$. Clearly, for $l\geq1$,
\beq\label{eqn:gbgw-corr}
\langle\sigma_{2a_1+1}\cdots \sigma_{2a_l+1}\rangle(x,\hbar) \= 
\sum_{g\geq0} \, \hbar^{2g-2} \, c_g(a_1,\cdots, a_l) \, x^{2|a|-2g+2} \,.
\eeq

It is clear from~\eqref{def:gbgw-part} that for the generalized BGW partition function $Z(x,\bbT;\hbar)$ (see~\eqref{def:gbgw-part}) the 
Virasoro constraints~\eqref{eqn:vira-gBGW} translate into
\beq\label{eqn:vira-gBGW-x}
L_m \left(Z(x, {\bf T}; \hbar)\right) \= 0 \,, \quad m\geq0 \,,
\eeq
where the operators $L_m$ are defined by
\begin{align}
L_m \:= &
 \sum_{a\geq0} \frac{(2a+2m+1)!!}{2^{m+1}(2a-1)!!} \, \widetilde {T}_{2a+1} \, \frac{\p}{\p T_{2a+2m+1}} \,\nn \\
 &\+ \frac{\hbar^2}2 \sum_{a+b=m-1}\frac{(2a+1)!! \, (2b+1)!!}{2^{m+1}} \frac{\p^2}{\p T_{2a+1}\p T_{2b+1}} 
\+ \biggl(\frac{1}{16} \+ \frac{x^2}{8 \, \hbar^2}\biggr) \, \delta_{m,0} \,. \label{viraLmxT1214}
\end{align}

We have the following lemma.

\begin{lemma} The generalized BGW partition function $Z(x,{\bf T};\hbar)$ satisfies the dilaton equation:
\beq\label{eqn:dilaton}
	L_{\rm dilaton} (Z(x,{\bf T};\hbar)) \= 0 \,,
\eeq
where $L_{\rm dilaton}$ is the linear operator defined by
\beq
L_{\rm dilaton} \:= \sum_{a\geq 0}\,\widetilde T_{2a+1}\,\frac{\p}{\p T_{2a+1}}\+x\,\frac{\p}{\p x}\+\hbar\,\frac{\p }{\p \hbar}\+\frac{1}{24} \,.
\eeq
\end{lemma}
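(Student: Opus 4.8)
The plan is to pass to the free energy and reduce the statement to the $m=0$ Virasoro constraint together with an explicit evaluation of the $\bbT$-independent part of $\F$. Since $L_{\rm dilaton}$ is a first-order operator plus the constant $\tfrac1{24}$ and $Z=e^{\F}$, the equation $L_{\rm dilaton}(Z)=0$ is equivalent to
\[
\mathcal{D}\,\F \= -\,\frac1{24}\,, \qquad \mathcal{D} \:= \sum_{a\geq0}\widetilde T_{2a+1}\,\frac{\partial}{\partial T_{2a+1}} \+ x\,\frac{\partial}{\partial x} \+ \hbar\,\frac{\partial}{\partial\hbar}\,.
\]
I would prove this identity directly, the crucial input being the genus expansion \eqref{genusexpansion} together with \eqref{eqn:gbgw-corr}.

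The key observation is a homogeneity trade: every $\bbT$-dependent monomial of $\F$ has the shape $\hbar^{2g-2}\,x^{2|a|-2g+2}\prod_p T_{2a_p+1}$, on which $x\partial_x+\hbar\partial_\hbar$ acts by multiplication by $(2|a|-2g+2)+(2g-2)=2|a|=\sum_p 2a_p$. Hence on the $\bbT$-dependent part $\F^{\ge1}:=\F-\F^{(0)}$, where $\F^{(0)}$ denotes the $\bbT$-independent part of $\F$, one has $(x\partial_x+\hbar\partial_\hbar)\F^{\ge1}=\sum_{a\geq0}2a\,T_{2a+1}\partial_{T_{2a+1}}\F^{\ge1}$; and since $\sum_a 2a\,T_{2a+1}\partial_{T_{2a+1}}$ annihilates $\F^{(0)}$, I may replace $(x\partial_x+\hbar\partial_\hbar)\F$ by $\sum_a 2a\,T_{2a+1}\partial_{T_{2a+1}}\F+(x\partial_x+\hbar\partial_\hbar)\F^{(0)}$. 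Combining the two first-order pieces and using $\widetilde T_{2a+1}=T_{2a+1}-\delta_{a,0}$ gives
\[
\mathcal{D}\,\F \= \Big(\sum_{a\geq0}(2a+1)\,T_{2a+1}\,\frac{\partial}{\partial T_{2a+1}} - \frac{\partial}{\partial T_1}\Big)\F \+ (x\partial_x+\hbar\partial_\hbar)\,\F^{(0)} \= 2\,\mathcal{L}_0\,\F \+ (x\partial_x+\hbar\partial_\hbar)\,\F^{(0)}\,,
\]
where $\mathcal{L}_0:=\sum_{a\geq0}\tfrac{2a+1}{2}\widetilde T_{2a+1}\partial_{T_{2a+1}}$ is exactly the first-order part of the operator $L_0$ from \eqref{viraLmxT1214}.

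Now the $m=0$ case of \eqref{eqn:vira-gBGW-x} gives, after dividing by $Z$, the relation $\mathcal{L}_0\F=-\tfrac1{16}-\tfrac{x^2}{8\hbar^2}$, so $2\mathcal{L}_0\F=-\tfrac18-\tfrac{x^2}{4\hbar^2}$. It then remains to evaluate $(x\partial_x+\hbar\partial_\hbar)\F^{(0)}$ on the explicit series coming from \eqref{eqn:bernoulli}: the genus $\geq2$ tail $\sum_{g\geq2}\hbar^{2g-2}x^{2-2g}\,C_g$ is homogeneous of degree zero and is annihilated, the genus-one term $\tfrac1{12}\log(-x/2)$ contributes $\tfrac1{12}$, and the genus-zero term contributes $\tfrac{x^2}{4\hbar^2}$, so $(x\partial_x+\hbar\partial_\hbar)\F^{(0)}=\tfrac{x^2}{4\hbar^2}+\tfrac1{12}$. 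Substituting, the $x^2/\hbar^2$ terms cancel and $\mathcal{D}\F=-\tfrac18+\tfrac1{12}=-\tfrac1{24}$, as required.

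The only genuine subtlety is the homogeneity trade of the second paragraph: this is where the precise genus/degree structure \eqref{genusexpansion} enters, and it is what converts the $x\partial_x+\hbar\partial_\hbar$ piece of $L_{\rm dilaton}$ — a direction absent from the Virasoro operators — into something expressible through $L_0$; once it is in place, only the finite Bernoulli/low-genus computation remains. I note that a more self-contained route avoids \eqref{genusexpansion}: differentiating $\mathcal{D}\F+\tfrac1{24}$ twice in $T_1$ and using $[\partial_{T_1},\mathcal{D}]=\partial_{T_1}$ reduces the claim to $\mathcal{D}u=0$ for $u=\hbar^2\partial_{T_1}^2\F$; since the Gelfand--Dickey polynomials of \eqref{KdVThbar} are homogeneous of the appropriate weight, $\mathcal{D}u$ solves the linearised KdV hierarchy, and it vanishes on the slice $T_3=T_5=\cdots=0$ by the initial datum \eqref{initialugBGW202218}, whence $\mathcal{D}u\equiv0$ by uniqueness of the Cauchy problem. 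I would present the first route, as it is the shortest.
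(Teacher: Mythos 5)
Your proof is correct and follows essentially the same route as the paper's: both rest on exactly the two ingredients of (i) the homogeneity/degree count coming from the genus expansion \eqref{genusexpansion} and \eqref{eqn:bernoulli}, which trades $x\p_x+\hbar\p_\hbar$ for $\sum_a 2a\,\widetilde T_{2a+1}\p_{T_{2a+1}}$ up to the explicit genus-$0$ and genus-$1$ corrections $\frac{x^2}{4\hbar^2}+\frac1{12}$, and (ii) the $m=0$ Virasoro constraint \eqref{eqn:l0-z}. The paper phrases the combination at the level of $Z$ and you at the level of $\F$, but this is only a cosmetic difference.
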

\begin{proof}
	On one hand, by using~\eqref{eqn:bernoulli} and~\eqref{genusexpansion} we have
	\beq
	-x\,\frac{\p \mathcal F_g(x,{\bf T})}{\p x}\+\sum_{a\geq 0}\, 2a\, \widetilde T_{2a+1}\,\frac{\p \mathcal F_g(x,{\bf T})}{\p T_{2a+1}}\= (2g-2)\, \mathcal F_g(x,{\bf T})\,-\,\frac{x^2}{4}\,\delta_{g,0}\,-\,\frac{1}{12}\,\delta_{g,1}\,,
	\eeq
	which gives
	\beq\label{eqn:deg-z}
	x\,\frac{\p Z}{\p x}\,-\,\sum_{a\geq 0}\, 2a\, \widetilde T_{2a+1}\,\frac{\p Z}{\p T_{2a+1}} \+ \hbar \, \frac{\p Z}{\p \hbar} \,-\, \bigg(\frac{x^2}{4\,\hbar^2}\+\frac{1}{12}\bigg) Z\=0\,.
	\eeq
	On another hand, the $m=0$ case of~\eqref{viraLmxT1214} reads
	\beq\label{eqn:l0-z}
	\sum_{a\geq 0}\,\frac{2a+1}{2}\,\widetilde T_{2a+1}\,\frac{\p Z(x,{\bf T};\hbar)}{\p T_{2a+1}}\+ \biggl(\frac{1}{16} 
	\+ \frac{x^2}{8 \, \hbar^2}\biggr)\, Z(x,{\bf T};\hbar) \= 0 \,.
	\eeq
	The lemma follows from \eqref{eqn:deg-z} and \eqref{eqn:l0-z}.
\end{proof}

Let us now prove the following lemma.
\begin{lemma}[\cite{A, BR}] \label{cajZ}
The solution in $\CC((\hbar^2))[[x+2]][[{\bf T}]]$ 
to the Virasoro constraints 
\beq\label{111vira1214}
L_m \bigl( \widehat Z(x,\bbT;\hbar) \bigr) \= 0
\eeq with the initial value
$\widehat Z (x,{\bf 0}; \hbar) = 1$
is unique. 
\end{lemma}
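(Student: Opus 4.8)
The plan is to use the Virasoro constraints \eqref{111vira1214}, together with the normalization $\widehat Z(x,{\bf 0};\hbar)=1$, to determine every coefficient of $\widehat Z$ in the $\bbT$-variables recursively. Write
\[
\widehat Z(x,\bbT;\hbar) \= \sum_{\alpha} Z_\alpha(x;\hbar)\,\bbT^\alpha, \qquad \bbT^\alpha \:= \prod_{a\geq 0} T_{2a+1}^{\alpha_a},
\]
where $\alpha=(\alpha_0,\alpha_1,\dots)$ ranges over multi-indices with finitely many nonzero entries and $Z_\alpha\in\CC((\hbar^2))[[x+2]]$. To each monomial I would attach the \emph{total index} $d(\alpha):=\sum_{a\geq 0}(2a+1)\,\alpha_a$, and write $e_a$ for the multi-index carrying a single $1$ in the slot of $T_{2a+1}$. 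The whole argument is then an induction on $d(\alpha)$: the base case $d(\alpha)=0$ forces $\alpha=0$, and $Z_0=1$ is the initial condition.

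First I would dispose of the dependence on $T_1$ by means of the $m=0$ constraint. Substituting $\widetilde T_1=T_1-1$ into \eqref{viraLmxT1214} and reading off the coefficient of $\bbT^\alpha$ in $L_0(\widehat Z)=0$ gives
\[
(\alpha_0+1)\,Z_{\alpha+e_0} \= \Bigl(d(\alpha)+\tfrac18+\tfrac{x^2}{4\hbar^2}\Bigr)\,Z_\alpha,
\]
so that whenever $\alpha_0\geq 1$ one solves
\[
Z_\alpha \= \frac1{\alpha_0}\Bigl(d(\alpha-e_0)+\tfrac18+\tfrac{x^2}{4\hbar^2}\Bigr)\,Z_{\alpha-e_0},
\]
expressing $Z_\alpha$ through the single coefficient $Z_{\alpha-e_0}$ of total index $d(\alpha)-1$. (The factor $\tfrac{x^2}{4\hbar^2}$ keeps us inside $\CC((\hbar^2))[[x+2]]$, since $x^2\in\CC[[x+2]]$.)

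It then remains to determine $Z_\alpha$ when $\alpha_0=0$. If $\alpha\neq 0$ there is some $m\geq 1$ with $\alpha_m\geq 1$; I would write $\alpha=\beta+e_m$, so that $\beta_0=\alpha_0=0$, and invoke the $L_m$-constraint. In $L_m$ the $a=0$ summand of the first sum in \eqref{viraLmxT1214} contributes the constant-coefficient term $-\tfrac{(2m+1)!!}{2^{m+1}}\,\p/\p T_{2m+1}$; isolating it and reading off the coefficient of $\bbT^\beta$ in $L_m(\widehat Z)=0$ yields
\[
\frac{(2m+1)!!}{2^{m+1}}\,(\beta_m+1)\,Z_\alpha \= \Bigl(\text{terms in } Z_\gamma \text{ with } d(\gamma)=d(\alpha)-1\Bigr).
\]
Indeed the remaining contributions are the index-transport terms $T_{2a+1}\,\p/\p T_{2a+2m+1}$ with $a\geq 1$, which produce $Z_{\beta-e_a+e_{a+m}}$, and the $\hbar^2$-terms $\p^2/\p T_{2a+1}\p T_{2b+1}$ with $a+b=m-1$, which produce $Z_{\beta+e_a+e_b}$; a short computation shows that both families of multi-indices have total index exactly $d(\alpha)-1$. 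The one term that would lower the number of factors, $T_1\,\p/\p T_{2m+1}$, contributes $Z_{\beta-e_0+e_m}$, but this vanishes precisely because $\beta_0=0$. As $\tfrac{(2m+1)!!}{2^{m+1}}(\beta_m+1)\neq 0$, this fixes $Z_\alpha$ in terms of coefficients of strictly smaller total index, closing the induction. All sums over $a$ are finite for a fixed target monomial because $\beta$ has finite support, so the recursion is well defined in $\CC((\hbar^2))[[x+2]][[\bbT]]$.

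I expect the only genuinely delicate point to be the choice of grading. An induction on the number of factors $\sum_a\alpha_a$ fails outright, because the quadratic $\hbar^2$-terms in $L_m$ \emph{raise} this number. The observation that rescues a single induction is that every term generated by the constraints lowers the total index $d(\alpha)$ by exactly one; inducting on $d(\alpha)$ — while carrying the full Laurent tail in $\hbar^2$ along, rather than inducting on $\hbar$-degree — is what makes the recursion well-founded and yields uniqueness. This is the argument of \cite{A,BR}, recast in Alexandrov's $x$-variable.
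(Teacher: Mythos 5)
Your proof is correct, but it takes a different route from the one the paper actually writes down. The paper's proof follows Alexandrov~\cite{A}: it packages all the constraints $L_m$, $m\geq 0$, into a single cut-and-join representation $\widehat Z = e^{\widehat W}(1)$, where $\widehat W$ raises the grading $d(\alpha)=\sum_a(2a+1)\alpha_a$ by exactly one, so that the degree-$k$ part of $\widehat Z$ is $\frac1{k!}\widehat W^k(1)$ and uniqueness is immediate; this has the added benefit of an effective closed formula for computing $\widehat Z$. You instead run the recursion coefficient by coefficient: the $L_0$-constraint eliminates all monomials containing $T_1$, and for $\alpha_0=0$, $\alpha=\beta+e_m\neq 0$, the constant-coefficient term $-\frac{(2m+1)!!}{2^{m+1}}\p_{T_{2m+1}}$ of $L_m$ isolates $Z_\alpha$ against terms of total index $d(\alpha)-1$. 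I checked the index bookkeeping: $d(\beta-e_a+e_{a+m})=d(\beta)+2m=d(\alpha)-1$ and $d(\beta+e_a+e_b)=d(\beta)+2m=d(\alpha)-1$ for $a+b=m-1$, so the induction on $d$ does close, and your observation that an induction on the number of factors would fail (the $\hbar^2$-terms raise it) is exactly the right diagnosis of why this grading is the correct one — it is the same grading that makes $\widehat W$ homogeneous of degree $+1$ in the paper's argument. This direct approach is essentially what the paper alludes to in the remark immediately after its proof (``Lemma~\ref{cajZ} also easily follows from a similar argument used in e.g.~\cite{DVV,LYZ}''). One cosmetic slip: the term $T_1\,\p/\p T_{2m+1}$ does not lower the number of factors (it preserves it), and in fact the $Z_{\beta-e_0+e_m}$ it would produce also has total index $d(\alpha)-1$, so the induction would survive even without invoking $\beta_0=0$; but your argument as written is fine.
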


\begin{proof}
Following the same arguments as those in~\cite{A} we see that if a solution $\widehat Z$ 
in $\CC((\hbar^2))[[x+2, {\bf T}]]$ to equations~\eqref{111vira1214} 
exists, it must have the following cut-and-join representation:
\beq\label{eqn:caj}
\widehat Z(x,{\bf T}; \hbar) \= e^{\widehat W}(1)\, ,
\eeq
where
\begin{align}
\widehat W \= & \sum_{a,b\geq0} \,\frac{(2a+2b+1)!!}{(2a-1)!!(2b-1)!!} \,T_{2a+1} \,T_{2b+1}\, \frac{\p}{\p  T_{2a+2b+1}} \,\nn\\
& \+ \frac{\hbar^2}2\, \sum_{a,b\geq0}\,\frac{(2a+1)!!(2b+1)!!}{(2a+2b+1)!!} \,T_{2a+2b+3} \,\frac{\p^2}{\p T_{2a+1}\p T_{2b+1}} 
\+ \left(\frac{1}{8} \+ \frac{x^2}{4\,\hbar^2}\right) T_1 \,.
\end{align}
The lemma is proved.
\end{proof}
\noindent Lemma~\ref{cajZ} also easily follows from a similar argument used in e.g.~\cite{DVV,LYZ}.

We note that the power series 
\[ Z_{\rm gBGW}\Bigl(\frac{x}{\hbar \, \sqrt{-2}},\bbT; \hbar\Bigr) \,\in\, \CC((\hbar^2))[x][[\bbT]] \, \subset \, \CC((\hbar^2))[[x+2]][[\bbT]]\] 
is a solution to~\eqref{111vira1214} satisfying 
$Z_{\rm gBGW}\bigl(\frac{x}{\hbar \, \sqrt{-2}}, {\bf 0}; \hbar)=1$.

\section{Proof of the Main Theorem}\label{section3}

In this section, we prove the Main Theorem. The proof will be similar to that of~\cite{DLYZ2} for the Hodge-GUE correspondence. 

Before entering into the details of the proof, we do several preparations. 
Let us introduce the following linear combinations of $L_k^{\rm H}$, $k\geq-1$:
\begin{equation}\label{wlmdef}
\widetilde L^{\rm H}_{m} \bigl(\e^{-1} \, \tilde {\bf t}, \e \, \p/\p {\bf t}\bigr) \:= - \, \sum_{k=-1}^\infty \, 
\frac{(-m)^{k+1}}{(k+1)!} \, L_k^{\rm H}\bigl(\e^{-1} \, \tilde {\bf t}, \e \, \p/\p {\bf t}\bigr) \,,\quad m\geq 0 \,. 
\end{equation} 

\begin{lemma} \label{alg_lem}
	For a basis $\{\alpha_k \, | \, k\geq -1\}$ of an infinite dimensional Lie algebra satisfying
	\beq
	[\alpha_k,\alpha_\ell] \= (k-\ell) \, \alpha_{k+\ell} \, ,\quad \forall \, k,\ell\geq -1\,,
	\eeq
	where $[\,,\,]$ denotes the Lie bracket of the Lie algebra, define 
	\beq
	\tilde \alpha_{m} \:= - \, \sum_{k\geq -1} \, \frac{(-m)^{k+1}}{(k+1)!} \, \alpha_k,\quad m\geq 0\,. 
	\eeq
	Then
	\beq\label{viraHtilde1216}
	\left[ \tilde \alpha_{m},\, \tilde \alpha_{n} \right] \= (m-n) \, \tilde \alpha_{m+n}, \quad \forall \, m,n\geq 0\,. 
	\eeq
\end{lemma}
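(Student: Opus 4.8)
The plan is to exploit the fact that the coefficients $\frac{(-m)^{k+1}}{(k+1)!}$ in the definition of $\tilde\alpha_m$ are exactly the Taylor coefficients of an exponential, so that $\tilde\alpha_m$ is an exponential generating function in the index~$k$. As motivation and as a guide to the computation, I would first record the standard realization of the abstract relation $[\alpha_k,\alpha_\ell]=(k-\ell)\alpha_{k+\ell}$ by the vector fields $\alpha_k\mapsto -z^{k+1}\partial_z$ ($k\geq-1$) acting on functions of a single variable~$z$; a one-line check gives $[-z^{k+1}\partial_z,\,-z^{\ell+1}\partial_z]=(k-\ell)(-z^{k+\ell+1}\partial_z)$, so these operators satisfy precisely the hypothesized bracket and are manifestly linearly independent. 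Resumming the defining series in this model yields the closed form
\[
\tilde\alpha_m \= \sum_{j\geq 0}\frac{(-mz)^j}{j!}\,\partial_z \= e^{-mz}\,\partial_z,
\]
which makes the target relation transparent, since $[e^{-mz}\partial_z,\,e^{-nz}\partial_z]=\bigl(e^{-mz}(e^{-nz})'-e^{-nz}(e^{-mz})'\bigr)\partial_z=(m-n)\,e^{-(m+n)z}\partial_z=(m-n)\tilde\alpha_{m+n}$.

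To turn this into a rigorous proof of the \emph{abstract} identity~\eqref{viraHtilde1216} I would expand the bracket bilinearly and match coefficients of the basis elements. Writing $\tilde\alpha_m=-\sum_k c_k^{(m)}\alpha_k$ with $c_k^{(m)}:=\frac{(-m)^{k+1}}{(k+1)!}$, bilinearity gives
\[
[\tilde\alpha_m,\tilde\alpha_n] \= \sum_{k,\ell\geq-1} c_k^{(m)}\,c_\ell^{(n)}\,(k-\ell)\,\alpha_{k+\ell},
\]
and for each fixed $p$ the coefficient of $\alpha_p$ is the \emph{finite} sum $\sum_{k+\ell=p,\,k,\ell\geq-1} c_k^{(m)}c_\ell^{(n)}(k-\ell)$ (the constraints force $-1\leq k\leq p+1$). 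Reindexing by $i=k+1$, $j=\ell+1$ and using $\frac{i}{i!}=\frac{1}{(i-1)!}$, each half of the $(i-j)$-weighted sum collapses via the Cauchy product $\sum_{i+j=P}\frac{(-m)^i(-n)^j}{i!\,j!}=\frac{(-(m+n))^{P}}{P!}$ (the coefficient identity behind $e^{-mz}e^{-nz}=e^{-(m+n)z}$), giving coefficient $(n-m)\frac{(-(m+n))^{p+1}}{(p+1)!}$. This is exactly the coefficient of $\alpha_p$ in $(m-n)\tilde\alpha_{m+n}=-(m-n)\sum_p c_p^{(m+n)}\alpha_p$, and since $\{\alpha_p\}$ is a basis the identity follows.

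I do not expect a serious obstacle here; the content is a short generating-function identity. The only points requiring care are bookkeeping ones, and they are the ones I would be most careful to state explicitly: first, that the infinite sums defining $\tilde\alpha_m$ and the expanded bracket are legitimate, which holds because each graded piece (the coefficient of a fixed $\alpha_p$) is a finite sum; and second, that no spurious term $\alpha_{-2}$ appears when $k=\ell=-1$, since that term carries the factor $(k-\ell)=0$. The realization $\tilde\alpha_m=e^{-mz}\partial_z$ may be invoked either purely as a check or as an alternative proof: because the operators $-z^{k+1}\partial_z$ reproduce the same structure constants and are linearly independent, the abstract coefficient-matching computation is identical to the one carried out in the model, so verifying the bracket there suffices.
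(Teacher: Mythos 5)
Your proposal is correct and its core argument — expanding the bracket bilinearly, collecting the (finite) coefficient of each $\alpha_p$, splitting the factor $(k-\ell)$ so each half collapses via the Cauchy product behind $e^{-mz}e^{-nz}=e^{-(m+n)z}$ — is exactly the computation in the paper's proof. The vector-field realization $\tilde\alpha_m=e^{-mz}\partial_z$ is a nice extra piece of motivation not present in the paper, but the rigorous part of your argument coincides with theirs.
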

\begin{proof}
	\begin{align}
		\bigl[ \tilde \alpha_{m},\, \tilde \alpha_{n} \bigr] 
		%\= & \Biggl [ \sum_{k_1=-1}^\infty \frac{(-m)^{k_1+1}}{(k_1+1)!} \, a_{k_1} \,,\, \sum_{k_2=-1}^\infty \frac{(-n)^{k_2+1}}{(k_2+1)!} \, a_{k_2} \Biggr] \nn\\
		\=& \sum_{k_1,k_2=-1}^\infty  \frac{(-m)^{k_1+1} \, (-n)^{k_2+1}}{(k_1+1)! \, (k_2+1)!} (k_1-k_2)\, \alpha_{k_1+k_2} \nn\\
		\=& \sum_{k=-1}^\infty \Biggl( \sum_{k_1+k_2=k\atop k_1\geq 0,k_2\geq -1} 
		\frac{(-m)^{k_1+1} \, (-n)^{k_2+1}}{k_1! \, (k_2+1)!}  
		- \sum_{k_1+k_2=k \atop k_1\geq-1,k_2\geq 0 } \frac{(-m)^{k_1+1} \, (-n)^{k_2+1}}{(k_1+1)! \, k_2!} \Biggr) \, \alpha_k \nn\\
		\=& \sum_{k=-1}^\infty \, \frac{(-1)^k}{(k+1)!} \, \bigl(  m \, (m+n)^{k+1} - n \, (m+n)^{k+1} \bigr) \, \alpha_k \= (m-n)\,  \tilde \alpha_{m+n}\,. \nn
	\end{align}
	The lemma is proved.
\end{proof}

\begin{prop}
We have
\begin{align}
\widetilde{L}_0^{\rm H}\bigl(\e^{-1} \, \tilde {\bf t}, \e \, \p/\p {\bf t}\bigr) \= & - \, \sum_{i\geq1} \, \tilde t_i \, \frac{\p }{\p t_{i-1}} \,-\, \frac{t_0^2}{2 \, \e^2} \+ \frac1{16}\,, \label{modifiedL0}\\
\widetilde{L}_1^{\rm H}\bigl(\e^{-1} \, \tilde {\bf t}, \e \, \p/\p {\bf t}\bigr) \= & \sum_{i\geq 0} \, \sum_{j=0}^{i+1} \, \sum_{r=0}^{i+1-j} \, \binom{i+1}{j+r} \, \frac{(-1)^{i-j-r}}{2^{r}} \, \tilde t_j \, 
\frac{\p}{\p t_i}  \nn\\
& \,-\, \frac{\epsilon^2}{8} \, \sum_{i,j\geq0} \, \Bigl(-\frac{1}{2}\Bigr)^{i+j} \, \frac{\p^2}{\p t_i \, \p t_j }
\,-\, \frac{t_0^2}{2 \, \epsilon^2} \+ \frac{1}{8} 
\,, \label{modifiedL1}\\
\widetilde{L}_2^{\rm H}\bigl(\e^{-1} \, \tilde {\bf t}, \e \, \p/\p {\bf t}\bigr) \= & - \, \frac12 \, \sum_{i_1,i_2,i_3,i,j\geq 0 \atop i_2 \leq i_1+i+j+i_3 \leq i_2+1} \, (-1)^{i+j+i_1+i_2+1} \, \binom{i_2+1}{i_3} \, 3^j \, 2^{i_3-i-j} \, \tilde t_{i_1} \, \frac{\p }{\p t_{i_2}} \nn\\
& - \, \frac12 \, \sum_{i_1,i_2,i_3,i,j\geq 0 \atop 
i_1 \le i+j+i_2+i_3\le i_1+1} \, (-1)^{i+j} \, \binom{-i_2}{i_3} \, 3^j \, 2^{i_3-i-j} \, \tilde t_{i_2} \, \frac{\p }{\p t_{i_1}} \nn\\
& - \, \frac{ \e^2 }2 \, \sum_{ i_1,i_2,i_3,i,j \ge0 \atop i_1+i_2+1 \leq i+j+i_3 \leq i_1+i_2+2} \, (-1)^{i+j+i_2+1} \, 3^j \, 2^{i_3-i-j} \, \binom{i_2+1}{i_3} \, \frac{\p^2 }{\p t_{i_1} \p t_{i_2}}  \nn\\
& - \, \frac{t_0^2}{2 \, \e^2} \+ \frac3{16}\,. \label{modifiedL2}
\end{align}
\end{prop}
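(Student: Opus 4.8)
The plan is to compute each $\widetilde L^{\rm H}_m$ directly through Givental's quantization, exploiting that the defining sum \eqref{wlmdef} telescopes into a single closed symbol. Since the Hamiltonian $H_A(f)=\tfrac12\omega(f,A(f))$ is linear in the infinitesimal symplectic transformation $A$, the quantization map $A\mapsto\widehat A$ is linear; hence from \eqref{LkHgivental} one has
\begin{equation*}
\widetilde L^{\rm H}_m\bigl(\epsilon^{-1}\tilde{\bf t},\epsilon\,\partial/\partial{\bf t}\bigr)
= \widehat{\Psi_m}\big|_{q_i\mapsto\tilde t_i,\,\partial_{q_i}\mapsto\partial_{t_i}} + c_m,
\qquad
\Psi_m := -\sum_{k\geq-1}\frac{(-m)^{k+1}}{(k+1)!}\,\Phi_k,
\end{equation*}
where the additive constant collects the $\tfrac{\delta_{k,0}}{16}-\tfrac{\delta_{k,-1}}{16}$ terms in \eqref{LkHgivental}: only $k=0,-1$ survive, giving $c_m=\tfrac{m}{16}+\tfrac1{16}=\tfrac{m+1}{16}$, which already produces the constants $\tfrac1{16},\tfrac18,\tfrac3{16}$ in \eqref{modifiedL0}--\eqref{modifiedL2}.

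First I would sum the series. Writing $\ell_k=(-1)^{k+1}z^{3/2}\circ\partial_z^{k+1}\circ z^{-1/2}$ and setting $j=k+1$,
\begin{equation*}
-\sum_{k\geq-1}\frac{(-m)^{k+1}}{(k+1)!}\,\ell_k
= -\,z^{3/2}\Bigl(\sum_{j\geq0}\frac{m^j}{j!}\,\partial_z^j\Bigr)z^{-1/2}
= -\,z^{3/2}\circ e^{m\partial_z}\circ z^{-1/2},
\end{equation*}
the operator $f(z)\mapsto -z^{3/2}(z+m)^{-1/2}f(z+m)$. Conjugating by the multiplication operator $\Phi=e^{\phi}$ gives
\begin{equation*}
\Psi_m:\ f(z)\longmapsto -\,z^{3/2}(z+m)^{-1/2}\,\frac{\Phi(z)}{\Phi(z+m)}\,f(z+m).
\end{equation*}
The key simplification is the closed form $\Phi(z)=z^{-1/2}\,\Gamma(z+1/2)/\Gamma(z)$, equivalently $\phi(z)=S(2z)-2S(z)$ with $S(z)=\sum_{k\geq1}\frac{B_{2k}}{2k(2k-1)}z^{1-2k}$ the Stirling series; this follows from the defining Bernoulli coefficients together with the Legendre duplication formula, and can be checked order by order (e.g.\ $\Phi(z)=1-\tfrac1{8z}+\cdots$). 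Feeding this in and using $\Gamma(z+m)/\Gamma(z)=(z)_m$ and $\Gamma(z+1/2)/\Gamma(z+m+1/2)=1/(z+\tfrac12)_m$ for integer $m$, all half-integer powers cancel and the symbol becomes rational:
\begin{equation*}
\Psi_m:\ f(z)\longmapsto -\,\frac{z\,(z)_m}{(z+\tfrac12)_m}\,f(z+m),
\qquad (z)_m:=z(z+1)\cdots(z+m-1).
\end{equation*}

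It then remains to quantize $\Psi_m$ for $m=0,1,2$. I would expand $\Psi_m$ in the Darboux basis $\{z^{-i},(-z)^{i+1}\}$, compute $H_{\Psi_m}(f)=-\tfrac12\,{\rm Res}_{z=\infty}f(-z)\,(\Psi_m f)(z)\,\tfrac{dz}{z^2}$, split it into its $qq$-, $pq$- and $pp$-parts, and apply $\widehat{p_ip_j}=\epsilon^2\partial_{q_i}\partial_{q_j}$, $\widehat{p_iq_j}=q_j\partial_{q_i}$, $\widehat{q_iq_j}=\epsilon^{-2}q_iq_j$. The $qq$-part is uniform in $m$: since $z(z)_m/(z+\tfrac12)_m=z+O(1)$ has only nonnegative powers of $z$ beyond the leading one and the shift $(z+m)^{-j}$ with $j\geq1$ only lowers the degree, the single monomial producing a residue is $q_0^2$, giving $-t_0^2/(2\epsilon^2)$ in every case. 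For $m=0$ the symbol is multiplication by $-z=-\ell_{-1}$, so $\widehat{\Psi_0}=-\widehat{\ell_{-1}}$ reproduces, via \eqref{kdvviragivental} and \eqref{virakdvintro1}, exactly $-\sum_{i\geq1}\tilde t_i\partial_{t_{i-1}}-t_0^2/(2\epsilon^2)$, yielding \eqref{modifiedL0} after adding $\tfrac1{16}$. For $m=1,2$ the binomial expansions of $(z+m)^{j+1}$ from the $p$-part of $f(z+m)$ (source of the $2^{\bullet}$ factors, and through the denominator factor $z+\tfrac32$ the $3^{\bullet}$ factors), of $(z+\tfrac12)_m^{-1}=\sum(-\tfrac12)^{\bullet}z^{-\bullet-1}$ (source of the $(-\tfrac12)^{i+j}$ and $2^{i_3-i-j}$ factors), and of $(z)_m$ produce precisely the binomial coefficients $\binom{i+1}{j+r}$, $\binom{i_2+1}{i_3}$, $\binom{-i_2}{i_3}$ in \eqref{modifiedL1}--\eqref{modifiedL2}.

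The main obstacle is this last step for $m=2$: organizing the three superposed expansions of $z^2(z+1)/\bigl((z+\tfrac12)(z+\tfrac32)\bigr)$ together with the shift $f(z+2)$ into the first-order ($pq$) and second-order ($pp$) pieces, while keeping the summation ranges (the constraints $i_2\leq i_1+i+j+i_3\leq i_2+1$ and their analogues) and all signs aligned with \eqref{modifiedL2}. This is a finite but delicate residue-and-reindexing computation; the $qq$-universality above and the $m=0,1$ outputs serve as guardrails against sign and indexing errors.
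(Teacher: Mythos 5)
Your proposal is correct and follows essentially the same route as the paper: both sum the defining series into the shift operator $-\,z^{3/2}\circ e^{m\p_z}\circ z^{-1/2}$, use the rationality of $\Phi(z+m)/\Phi(z)$ to reduce the symbol to $-\,z\,(z)_m/(z+\tfrac12)_m\circ e^{m\p_z}$, and then quantize; your Gamma-function closed form for $\Phi$ via Legendre duplication is just a self-contained derivation of the ratio identity the paper quotes from~\cite{AIKZ}. Like the paper, you leave the final residue-and-reindexing computation for $m=1,2$ as a routine but delicate step, so the two arguments match in both strategy and level of detail.
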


\begin{proof}
It follows from \eqref{LkHgivental}, \eqref{wlmdef} that 
the operators $\widetilde L_m^{\rm H}$, $m\geq 0$, have the following expressions:
\beq \label{wlm}
\widetilde L_m^{\rm H} \bigl( \e^{-1} \, \tilde {\bf t}, \e \, \p/\p{\bf t}\bigr) \= 
 \widehat{\Psi}_m\big|_{q_i\mapsto \tilde t_i, \, \p_{q_i} \mapsto \p_{t_i},\, i\geq 0}  \+ \frac{m+1}{16}\,,
\eeq
where 
\beq
\Psi_m(z) \:= - \, \sum_{k=-1}^\infty \, \frac{(-m)^{k+1}}{(k+1)!} \, \Phi_k(z)\,.
\eeq

Let us now prove~\eqref{modifiedL0}--\eqref{modifiedL2} one by one.  For $m=0$, we have
\beq
\widetilde L_0^{\rm H} \bigl( \e^{-1} \, \tilde {\bf t}, \e \, \p/\p{\bf t}\bigr) \=  - \, L_{-1}^{\rm H}\bigl(\e^{-1} \, \tilde {\bf t}, \e \, \p/\p {\bf t}\bigr) \,.
\eeq

For $m=1$, 
$$
\Psi_1(z) \= - \, \Phi(z) \, z^{3/2} \, e^{\p_z} \circ z^{-1/2} \circ \Phi^{-1}(z) \= - \, 
z^{3/2} \, (z+1)^{-1/2} \, \frac{\Phi(z)}{\Phi(z+1)} \, e^{\p_z}.
$$
Note that the following identity
\beq\label{ratio}
\frac{\Phi(z+1)}{\Phi(z)} \= \frac{z+\frac12}{\sqrt{z \, (z+1)}}
\eeq
is proved in~\cite{AIKZ}. Using this identity we get 
\beq
\Psi_1(z) \= - \, \frac{ z^2} {z+\frac12} \, e^{\p_z} \, .  
\eeq
Then by a straightforward computation similar to the one given in~\cite{DLYZ2} we arrive at formula~\eqref{modifiedL1}. 
For $m=2$, we notice the identity 
$$
\frac{\Phi(z+2)}{\Phi(z)} \= \frac{\bigl(z+\frac32\bigr) \, \bigl(z+\frac12\bigr)}{(z+1) \, \sqrt{z \, (z+2)}} \,.
$$
Similarly as above we get 
\beq
\Psi_2(z) \= - \, z^{3/2} \, (z+2)^{-1/2} \, \frac{\Phi(z)}{\Phi(z+2)} \, e^{2\p_z}
\= - \, z^2 \, \frac{z+1}{(z+\frac12) \, (z+\frac32)}  \, e^{2\p_z}.
\eeq
Formula~\eqref{modifiedL2} then follows. The theorem is proved.
\end{proof}

Let us continue the preparations. The following lemma establishes the relationship between the operator $L_{\rm dilaton} $ and 
the operator $\widetilde L^{\rm H}_{\rm dilaton}$ defined in the previous section. 
\begin{lemma} \label{dilatonlemma}
We have 
\begin{align}\label{dilatonid}
e^{-\frac{A(x,T)}{\hbar^2}} \circ L_{\rm dilaton} \circ e^{\frac{A(x,T)}{\hbar^2}} \= 
\widetilde L^{\rm H}_{\rm dilaton} \big|_{\epsilon=\hbar\,\sqrt{-4}} \,.
\end{align}
\end{lemma}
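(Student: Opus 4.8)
The plan is to observe that both sides of \eqref{dilatonid} are first-order differential operators plus a scalar, so the identity can be checked by matching the derivation parts and the scalar parts separately. Conjugating $L_{\rm dilaton}$ by $e^{A/\hbar^2}$ only modifies its scalar part: writing $L_{\rm dilaton} = D + \frac{1}{24}$ with $D := \sum_{a\geq0}\widetilde T_{2a+1}\,\p/\p T_{2a+1} + x\,\p/\p x + \hbar\,\p/\p\hbar$, one has $e^{-A/\hbar^2}\circ L_{\rm dilaton}\circ e^{A/\hbar^2} = L_{\rm dilaton} + D(A/\hbar^2)$, the last term acting by multiplication. So the first step is to show that $D(A/\hbar^2)=0$, after which \eqref{dilatonid} reduces to the claim that $L_{\rm dilaton}$ coincides with the pullback of $\widetilde L^{\rm H}_{\rm dilaton}\big|_{\epsilon=\hbar\sqrt{-4}}$ under the change of variables \eqref{deftT}.

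For the vanishing of $D(A/\hbar^2)$, I would split $A = A_1 + A_2$ according to \eqref{defA1215}, where $A_1$ is the term quadratic in $\widetilde T$ and $A_2$ is the $x\widetilde T$ term. Since $\p/\p T_{2a+1} = \p/\p\widetilde T_{2a+1}$, the operator $\sum_a\widetilde T_{2a+1}\,\p/\p T_{2a+1} + x\,\p/\p x$ is the Euler field counting total degree in $(\widetilde T, x)$, and both $A_1$ (degree $2$ in $\widetilde T$) and $A_2$ (degree $1$ in $\widetilde T$ and degree $1$ in $x$) are homogeneous of weight $2$; hence $\bigl(\sum_a\widetilde T_{2a+1}\,\p/\p T_{2a+1} + x\,\p/\p x\bigr)A = 2A$. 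Combined with $\hbar\,\p/\p\hbar\,(A/\hbar^2) = -2A/\hbar^2$, this gives $D(A/\hbar^2)=0$, so the conjugation is in fact trivial for the dilaton operator.

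The core of the argument is then a chain-rule computation. By \eqref{deftT} one has $\p t_i/\p x = \delta_{i,0}$ and $\p t_i/\p T_{2a+1} = -2\bigl(-\tfrac{2a+1}{2}\bigr)^i/a!$, and I would verify the key formula
\[
\Bigl(\sum_{a\geq0}\widetilde T_{2a+1}\,\frac{\p}{\p T_{2a+1}} + x\,\frac{\p}{\p x}\Bigr)\,t_i \= \tilde t_i\,, \qquad i\geq0\,,
\]
using $\widetilde T_{2a+1}=T_{2a+1}-\delta_{a,0}$ together with the elementary relation $-\bigl(-\tfrac12\bigr)^{i-1}=2\bigl(-\tfrac12\bigr)^i$ to cancel the constant terms produced by the $\delta_{a,0}$ shifts. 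Because $\tilde t_i$ is exactly the coefficient occurring in the chain-rule expansion, this identity shows that the pullback of $\sum_{i\geq0}\tilde t_i\,\p/\p t_i$ equals $\sum_{a\geq0}\widetilde T_{2a+1}\,\p/\p T_{2a+1} + x\,\p/\p x$ when applied to any function of the form $G(t(x,\bbT))$, in particular to $Z_{\rm H}$.

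Finally I would match the remaining terms: under $\epsilon=\hbar\sqrt{-4}$ the operator $\epsilon\,\p/\p\epsilon$ becomes $\hbar\,\p/\p\hbar$ (as $\epsilon$ is homogeneous of degree one in $\hbar$), and the additive constants $\tfrac{1}{24}$ agree, completing the identification with $L_{\rm dilaton}$ and hence with the left-hand side. I expect the main obstacle to be purely bookkeeping: correctly tracking the constant and $x$-linear contributions generated by the $\delta_{a,0}$ shifts in $\widetilde T_{2a+1}$ and the $\delta_{i,1}$ shift in $\tilde t_i$, so that the inhomogeneous pieces of \eqref{deftT} cancel exactly. There is no deeper analytic difficulty once the homogeneity of $A$ and the structure of the change of variables are exploited.
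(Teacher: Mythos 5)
Your proposal is correct and follows essentially the same route as the paper, which proves the lemma by noting exactly the two facts you establish: $\bigl[L_{\rm dilaton}, e^{A(x,\bbT)/\hbar^2}\bigr]=0$ (your homogeneity argument for $D(A/\hbar^2)=0$) and $\bigl[L_{\rm dilaton}, t_i(x,\bbT)\bigr]=\tilde t_i$ (your chain-rule identity). You have simply filled in the bookkeeping that the paper leaves as "immediate."
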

\begin{proof}
The identity follows immediately from $\Bigl[L_{\rm dilaton}, e^{\frac{A(x,T)}{\hbar^2}}\Bigr]=0$ and $\bigl[L_{\rm dilaton}, t_i\bigr]=\tilde t_i$.
\end{proof}

Similarly, we are to establish the relationships between the operators $L_m$ and the operators $\widetilde L_m^{\rm H}$. 
To this end, 
the following lemma would be helpful.

\begin{lemma}\label{lem:vira-bgw-hodge}
	For $i\geq0$, let
	\begin{align}
		X_i \= &\delta_{i,0}\, x \,- \,\sum_{a\geq 0}\,\Big(-\frac{2a+1}{2}\Big)^i \, \frac{\widetilde {T}_{2a+1} }{(a+1)!} \, , \\
		Y_i\= &\frac{2}{3}\,\delta_{i,0}\, x \,- \, \sum_{a\geq 0} \,\Big(-\frac{2a+1}{2}\Big)^i \,\frac{\widetilde {T}_{2a+1} }{(a+2)!} \, ,
	\end{align}
	then 
	\begin{align}
		X_{i} \=&\frac{1}{2^{i}} \, X_0 \,-\, \sum_{j=0}^{i-1}\,\frac{1}{2^{i-j}}\, \tilde t_j \,, \\
		Y_{i} \=&\frac{3^i}{2^i} \, Y_0\,-\,\frac{3^{i}-1}{2^{i}}\,X_0\+\sum_{j=0}^{i-2} \,\frac{3^{i-1-j}-1}{2^{i-j}}\,\tilde t_j \,.
	\end{align}
\end{lemma}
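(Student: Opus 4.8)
The plan is to reduce both closed-form identities to simple one-step recursions in $i$ and then integrate those recursions by induction, the whole argument resting on two elementary factorial identities that realize multiplication by $-\frac{2a+1}{2}$ as a shift operator.

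First I would rewrite the variables $\tilde t_i$ purely in terms of the $\widetilde T_{2a+1}$. Substituting $T_{2a+1}=\widetilde T_{2a+1}+\delta_{a,0}$ into~\eqref{deftT} and using $\tilde t_i=t_i-\delta_{i,1}$, the term $-(-1/2)^{i-1}$ and the $a=0$ contribution $-2(-1/2)^i$ cancel identically, since $(-1/2)^{i-1}=-2(-1/2)^i$. This leaves the clean expression
\[
\tilde t_i \= \delta_{i,0}\, x \,-\, 2\sum_{a\geq0}\Big(-\tfrac{2a+1}{2}\Big)^i\,\frac{\widetilde T_{2a+1}}{a!}\,.
\]
Writing $w_a:=-\frac{2a+1}{2}$, this puts $\tilde t_i$, $X_i$, and $Y_i$ on an equal footing: up to a multiple of $\delta_{i,0}x$ they are, respectively, the sums $\sum_a w_a^i\,\widetilde T_{2a+1}/(a+k)!$ with $k=0,1,2$. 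In particular $\sum_a w_a^{i-1}\widetilde T_{2a+1}/a! = \tfrac12(\delta_{i-1,0}x-\tilde t_{i-1})$, $\sum_a w_a^{i-1}\widetilde T_{2a+1}/(a+1)! = \delta_{i-1,0}x - X_{i-1}$, and $\sum_a w_a^{i-1}\widetilde T_{2a+1}/(a+2)! = \tfrac23\delta_{i-1,0}x - Y_{i-1}$.

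The heart of the argument is the two identities that convert a factor $w_a$ into a shift of $k$, both coming from the splitting $a+\tfrac12=(a+k)-\tfrac{2k-1}{2}$: for $k=1$,
\[
\frac{w_a}{(a+1)!} \= -\frac{1}{a!}\+\frac{1}{2\,(a+1)!}\,,
\]
and for $k=2$,
\[
\frac{w_a}{(a+2)!} \= -\frac{1}{(a+1)!}\+\frac{3}{2\,(a+2)!}\,.
\]
Multiplying each by $w_a^{i-1}\widetilde T_{2a+1}$, summing over $a$, and using the three summation formulas above (the $\delta_{i-1,0}x$ terms cancel for $i\geq1$), these translate directly into the one-step recursions
\[
X_i \= \tfrac12\,X_{i-1}\,-\,\tfrac12\,\tilde t_{i-1}\,,\qquad
Y_i \= \tfrac32\,Y_{i-1}\,-\,X_{i-1}\,,\qquad i\geq 1\,.
\]

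Finally I would solve these recursions. The first yields $X_i=\frac{1}{2^i}X_0-\sum_{j=0}^{i-1}2^{-(i-j)}\tilde t_j$ immediately by induction on $i$. Substituting the resulting formula for $X_{i-1}$ into the $Y$-recursion and inducting again gives the stated expression for $Y_i$: the $Y_0$ and $X_0$ coefficients accumulate the geometric factors $3^i/2^i$ and $-(3^i-1)/2^i$, while in the $\tilde t_j$ coefficients the constant $-3$ from $\tfrac32$-scaling combines with the $+2$ from the $-X_{i-1}$ term to telescope into $3^{i-1-j}-1$, producing $\frac{3^{i-1-j}-1}{2^{i-j}}$. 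I do not expect any genuine obstacle here; the only step requiring care is spotting the correct splitting $a+\tfrac12=(a+k)-\tfrac{2k-1}{2}$ that realizes the shift, after which the two recursions and their inductive solution are entirely routine.
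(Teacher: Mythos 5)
Your proof is correct and follows essentially the same route as the paper, whose entire proof is the observation of the two one-step recursions $X_{i+1}=\tfrac12 X_i-\tfrac12\tilde t_i$ and $Y_{i+1}=\tfrac32 Y_i-X_i$; you derive exactly these (via the correct splittings $w_a/(a+k)!=-1/(a+k-1)!+\tfrac{2k-1}{2}/(a+k)!$) and then solve them by induction, with all coefficients checking out.
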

\begin{proof}
By observing that $Y_{i+1} = \frac{3}{2}\,Y_i-X_i$ and $X_{i+1} = \frac12 \,X_i-\frac12\,\tilde t_i$.
\end{proof}

\begin{prop}\label{conjlm} The following identities are true:
\begin{align}
e^{-\frac{A(x,T)}{\hbar^2}} \circ L_m \circ e^{\frac{A(x,T)}{\hbar^2}} \= 
\widetilde L_m^{\rm H} \big|_{\epsilon=\hbar\,\sqrt{-4}}\,,\quad m\geq0\,. \label{conjeq1215}
\end{align}
\end{prop}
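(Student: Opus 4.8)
The plan is to prove Proposition~\ref{conjlm} by establishing, for each fixed $m\geq 0$, the operator identity~\eqref{conjeq1215} through a change-of-variables computation that reduces the left-hand side to an explicit differential operator, and then matching it term by term against the formula for $\widetilde L_m^{\rm H}$ already computed in the preceding Proposition (equations~\eqref{modifiedL0}--\eqref{modifiedL2}). The conjugation $e^{-A/\hbar^2}\circ L_m \circ e^{A/\hbar^2}$ acts only by shifting each first-order derivative $\partial/\partial T_{2a+1}$ by the corresponding derivative of $A/\hbar^2$; since $A$ is quadratic in $(x,\bbT)$, these shifts are linear in $(x,\bbT)$, so the conjugated operator is again a second-order differential operator of the same shape as $L_m$ but with modified coefficients. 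First I would compute $\partial A/\partial T_{2a+1}$ explicitly from~\eqref{defA1215}, obtaining a linear expression in the $\widetilde T_{2b+1}$ and in $x$.

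The crucial mechanism is the substitution~\eqref{deftT} expressing the Witten--Kontsevich times $t_i$ (equivalently $\tilde t_i = t_i - \delta_{i,1}$) in terms of $x$ and $\bbT$. Under this substitution the generating-function combinations $X_i$ and $Y_i$ introduced in Lemma~\ref{lem:vira-bgw-hodge} appear naturally, and the recursions $X_{i+1}=\tfrac12 X_i - \tfrac12 \tilde t_i$ and $Y_{i+1}=\tfrac32 Y_i - X_i$ (hence the closed forms in that lemma) are precisely what convert the $T$-variable sums on the BGW side into the $t$-variable sums with the binomial and powers-of-$2$ (and powers-of-$3$ for $m=2$) coefficients appearing in~\eqref{modifiedL1}--\eqref{modifiedL2}. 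Concretely, I would change variables in $L_m$ from $\partial/\partial T_{2a+1}$ to $\partial/\partial t_i$ using the chain rule with~\eqref{deftT}, namely $\partial/\partial T_{2a+1} = -2\sum_{i\geq0}(-(2a+1)/2)^i\,(1/a!)\,\partial/\partial t_i$, and substitute into the three pieces of~\eqref{viraLmxT1214}: the first-order part, the second-order $\hbar^2$ part, and the constant/$\delta_{m,0}$ part, keeping in mind the rescaling $\epsilon = \hbar\sqrt{-4}$ so that $\hbar^2 = -\epsilon^2/4$.

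The main obstacle I expect is the bookkeeping in the first-order term: the factor $(2a+2m+1)!!/\bigl(2^{m+1}(2a-1)!!\bigr)$ is a polynomial in $a$ of degree $m$, so after inserting the chain-rule sum and resumming over $a$ one must recognize the resulting double sums as the finite binomial sums over $i,j$ (and $i_3$ for $m=2$) that appear in the target formulas; the conjugation shift by $\partial A/\partial T$ must then be seen to supply exactly the $-t_0^2/(2\epsilon^2)$ terms together with the $X_0,Y_0$ contributions that Lemma~\ref{lem:vira-bgw-hodge} absorbs. I would organize this by treating $m=0,1,2$ separately, exploiting that $L_0$ already matches $\widetilde L_0^{\rm H}=-L_{-1}^{\rm H}$ via the dilaton/degree relations, and that for $m=1,2$ the generating-function identities for $\Psi_1(z)$ and $\Psi_2(z)$ (whose quantizations gave~\eqref{modifiedL1},~\eqref{modifiedL2}) can be matched against the $z\mapsto z+m$ shift structure encoded in the chain rule. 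Finally, since both sides satisfy the Virasoro relations (Lemma~\ref{alg_lem} on the Hodge side, and~\eqref{viralmbgw1216} transported to the $L_m$ via Givental quantization on the BGW side), I would note that it suffices in principle to verify~\eqref{conjeq1215} for $m=0,1$ and deduce higher $m$ by the bracket $[\,\cdot\,,\,\cdot\,]$, though carrying out $m=2$ directly serves as an independent consistency check.
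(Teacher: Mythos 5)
Your overall strategy coincides with the paper's: conjugate $L_m$ by $e^{A(x,{\bf T})/\hbar^2}$ (which, as you say, amounts to shifting each $\p/\p T_{2a+1}$ by $\hbar^{-2}\,\p A/\p T_{2a+1}$), rewrite the result in the $t$-variables via the chain rule induced by \eqref{deftT}, absorb the resulting sums using the recursions and closed forms of Lemma~\ref{lem:vira-bgw-hodge}, and match the outcome against \eqref{modifiedL0}--\eqref{modifiedL2}, reducing the general case to finitely many values of $m$ by the Virasoro commutation relations \eqref{viralmbgw1216} and \eqref{viraHtilde1216}. This is exactly how the paper proceeds, including the decomposition of the $m=1$ conjugation into a purely quadratic-in-$A$ part, a first-order part, and the $\hbar^2$ second-order part.

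One remark in your plan is incorrect, although it does not sink the proof as you have laid it out. You claim that verifying \eqref{conjeq1215} for $m=0,1$ suffices and that higher $m$ then follow from the bracket, so that the $m=2$ computation is only a consistency check. This is false: since $[L_0,L_1]=-L_1$, the Lie algebra generated by $L_0$ and $L_1$ inside the span of $\{L_m\}_{m\geq0}$ is just the span of $L_0$ and $L_1$, so the case $m=2$ cannot be deduced from the bracket and must be verified directly. Once $m=0,1,2$ are established, the relation $[L_1,L_{m-1}]=(2-m)\,L_m$ does yield all $m\geq3$, which is precisely why the paper checks three base cases. Because you state that you would carry out the $m=2$ computation in any event, your argument still closes; but the logical justification for why three, and not two, base cases are required should be corrected.
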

\begin{proof}
Because of the Virasoro commutation relations~\eqref{viralmbgw1216} and~\eqref{viraHtilde1216}, we just need to prove the $m=0,1,2$ cases.
	For $m=0$,
	\begin{align}
		& e^{-\frac{A(x,T)}{\hbar^2}} \circ L_0 \circ e^{\frac{A(x,T)}{\hbar^2}} \nn\\
		\=& \sum_{a\geq 0}\,\frac{2a+1}{2}\,\widetilde T_{2a+1}\frac{\p }{\p T_{2a+1}}
		\+\frac{1}{16}\+\frac{x^2}{8\, \hbar^2} \+\frac{1}{\hbar^2}\,\sum_{a\geq 0}\,\frac{2a+1}{2}\,\widetilde T_{2a+1}\,\frac{\p A}{\p T_{2a+1}} \nn\\
		\=& \sum_{i\geq 0}\,\sum_{a\geq 0} \,2\,\Big(-\frac{2a+1}{2}\Big)^{i+1}\,\frac{\widetilde T_{2a+1}}{a!}\, \frac{\p }{\p t_i}
		\+\frac{1}{16}\+\frac{x^2}{8\, \hbar^2} \nn\\
		& \+ \frac{1}{\hbar^2}\,\sum_{a, b \geq 0}\,\frac{\widetilde T_{2a+1} \widetilde T_{2b+1}}{2\, a!\,b!}\,-\,  \frac{1}{\hbar^2}\,\sum_{a\geq 0}\, \frac{x\, \widetilde T_{2a+1}}{2\,a!}\nn\\
		\=&-\sum_{i\geq 0}\,\tilde t_{i+1}\,\frac{\p }{\p t_i}\+\frac{1}{16} \+\frac{t_0^2}{8\,\hbar^2}\nn\\
		\=& \widetilde L_{0}^{\rm H}\big|_{\epsilon=\hbar\sqrt{-4}}. \nn
	\end{align}
For $m=1$,
	\begin{align}
	& e^{-\frac{A(x,T)}{\hbar^2}} \circ L_1 \circ e^{\frac{A(x,T)}{\hbar^2}} \nn\\
	\=& \sum_{a\geq 0}\,\frac{(2a+1)(2a+3)}{4}\,\widetilde T_{2a+1}\frac{\p }{\p T_{2a+3}}
	\+\frac{\hbar^2}{8}\,\frac{\p^2}{\p T_1^2} \+\frac{1}{\hbar^2}\,\sum_{a\geq 0}\,\frac{(2a+1)(2a+3)}{4}\,\widetilde T_{2a+1}\frac{\p A}{\p T_{2a+3}}\nn\\
	&\+\frac{1}{8\,\hbar^2}\,\frac{\p A}{\p T_1}\,\frac{\p A}{\p T_1} \+\frac{1}{8}\,\frac{\p^2 A}{\p T_1^2} \+\frac{1}{4}\,\frac{\p A}{\p T_1}\,\frac{\p }{\p T_1} \nn\\
	\=&({\rm I})\+({\rm II})\+({\rm III}) \,, \label{781215}
\end{align}
where
\begin{align}
	({\rm I})\:=& \frac{1}{\hbar^2}\,\sum_{a\geq 0}\,\frac{(2a+1)(2a+3)}{4}\, \widetilde T_{2a+1}\,\frac{\p A}{\p T_{2a+3}}
	\+\frac{1}{8\,\hbar^2}\,\frac{\p A}{\p T_1}\,\frac{\p A}{\p T_1} \+\frac{1}{8}\,\frac{\p^2 A}{\p T_1^2}\,, \\
	({\rm II})\:=& \sum_{a\geq 0}\,\frac{(2a+1)(2a+3)}{4}\,\widetilde T_{2a+1}\, \frac{\p }{\p T_{2a+3}} \+\frac{1}{4}\,\frac{\p A}{\p T_1}\,\frac{\p }{\p T_1}\,, \\
	({\rm III})\:=& \frac{\hbar^2}{8}\,\frac{\p^2}{\p T_1^2}\, .
\end{align}
By a direct computation, 
\beq
({\rm I})\=\frac{1}{8}\+\frac{t_0^2}{8\,\hbar^2},\qquad
({\rm III})\=\frac{\hbar^2}{2}\,\sum_{i,j\geq 0}\,\Big(-\frac{1}{2}\Big)^{i+j}\,\frac{\p^2}{\p t_i\p t_j} \,, \label{821215}
\eeq
and
\begin{align}
	({\rm II})
	\= &\sum_{i\geq 0}\,\bigg(\sum_{a\geq 0}\,(2a+1)\,\frac{\widetilde T_{2a+1}}{(a+1)!} \, \Bigl(-\frac{2a+3}{2}\Bigr)^{i+1}
	\+\frac{1}{2}\,(-\frac{1}{2})^i\,\Big( x-\sum_{a\geq 0}\frac{\widetilde {T}_{2a+1}}{(a+1)!}\Big)\bigg)\, \frac{\p }{\p t_i}\nn\\
	\= &\sum_{i\geq 0}\,\bigg(-2\sum_{j=0}^{i+1}\,\binom{i+1}{j}(-1)^{i+1-j}\,\sum_{a\geq 0} \frac{\widetilde {T}_{2a+1}}{(a+1)!}\,\Bigl(-\frac{2a+1}{2}\Bigr)^{j+1}
	\+\frac{1}{2}\,\Bigl(-\frac{1}{2}\Bigr)^i\,X_0 \bigg)\, \frac{\p }{\p t_i}\nn \\
	\= &\sum_{i\geq 0}\,\bigg(2\, \sum_{j=0}^{i+1}\,\binom{i+1}{j}(-1)^{i+1-j} \, X_{j+1}
	\+\frac{1}{2}\,\Bigl(-\frac{1}{2}\Bigr)^i \, X_0\bigg)\, \frac{\p }{\p t_i} \nn\\
	\= &\sum_{i\geq 0} \sum_{j=0}^{i+1} \,\bigg(\sum_{r=0}^{i+1-j}\,\binom{i+1}{j+r}\,(-1)^{i-j-r}\,\frac{1}{2^{r}}\bigg)\,\tilde t_j\,\frac{\p}{\p t_i}\,. \label{831215}
\end{align}
Here we have used Lemma \ref{lem:vira-bgw-hodge}.  
From~\eqref{781215}, \eqref{821215}, \eqref{831215} we obtain the validity of the $m=1$ case of~\eqref{conjeq1215}.
In a similar way, for $m=2$ we can obtain 
\begin{align}
	e^{-\frac{A(x,T)}{\hbar^2}} \circ L_2 \circ e^{\frac{A(x,T)}{\hbar^2}}
	 \=&\sum_{i\geq 0} \sum_{j=0}^{i+1} \, \bigg(\sum_{r=j}^{i+1}\,\binom{i+1}{r}\,(-2)^{i-r}\,\frac{3^{r-j}+1}{2^{r-j}}\bigg)\, \tilde t_j\,\frac{\p}{\p t_i} \nn\\
	 & \+\frac{\hbar^2}{2}\,\sum_{i,j\geq 0}\, \Bigl(-\frac{1}{2}\Bigr)^{i+j}\,3^{j+1}\,\frac{\p^2}{\p t_i\p t_j} \+ \frac{3}{16}\+\frac{t_0^2}{8\,\hbar^2}.
\end{align}
One can check that this equals $\widetilde{L}_2^{\rm H}$ given in~\eqref{modifiedL2}. 
The proposition is proved.
\end{proof}

We are ready to prove Theorem~\ref{main}.

\begin{proof}[Proof of Theorem~\ref{main}] 
By using the degree-dimension counting~\eqref{ddhodge} and~\eqref{deftT}, we find that the expression
$\mathcal{H}\bigl({\bf t}(x, {\bf T}); \hbar\,\sqrt{-4}\bigr)$ is a well-defined element 
in $\hbar^{-2}\CC[[\hbar^2]][[x+2]][[{\bf T}]]$, therefore, 
$e^{\frac{A(x,{\bf T})}{\hbar^2}} Z_{\rm H}\bigl({\bf t}(x, {\bf T}); \hbar\,\sqrt{-4} \bigr)$ 
is a well-defined element in  
$\CC((\hbar^2))[[x+2]][[{\bf T}]]$. It follows from Proposition~\ref{conjlm} that
\beq\label{viraaZ1216}
L_m \Bigl(e^{\frac{A(x,{\bf T})}{\hbar^2}} Z_{\rm H}\bigl({\bf t}(x, {\bf T}); \hbar\,\sqrt{-4} \bigr)\Bigr) \= 0 \,, \quad m\geq 0 \,, 
\eeq
which are the same as the linear equations~\eqref{eqn:vira-gBGW-x} for the power series $Z(x,{\bf T};\hbar)$. 
It then follows from Proposition~\ref{cajZ} that $e^{\frac{A(x,{\bf T})}{\hbar^2}} Z_{\rm H}\bigl({\bf t}(x, {\bf T}); \hbar \,\sqrt{-4} \bigr)$
and $Z(x,{\bf T};\hbar)$ could only differ by multiplying by a non-zero element in $\CC((\hbar^2))[[x+2]]$. More precisely, if 
we denote 
\beq 
R(x,\hbar) \:= \frac{A(x,{\bf 0})}{\hbar^2} \+ \cH \bigl({\bf t}(x, {\bf 0}); \hbar\,\sqrt{-4} \bigr) \;\in\; \hbar^{-2}\CC[[x+2]][[\hbar^2]] \,,
\eeq 
then it remains to show that $R(x,\hbar)=B(x,\hbar)$.
 Write 
 \beq
 R(x,\hbar) \;=:\; \sum_{g\geq0} \, \hbar^{2g-2} \, R_g(x) \,.
 \eeq
 
Let us first compute $R_{0}(x)$. Let $v({\bf t})=\frac{\p^2\cH_0({\bf t})}{\p t_0^2}$. It is well known (cf.~e.g.~\cite{DLYZ1}) that 
\begin{align}
& v({\bf t})\=t_0\+\sum_{k\geq 2}\,\frac{1}{k}\,\sum_{\substack{i_1,\cdots, i_k\geq 0\\ i_1+\cdots + i_k=k-1}} 
\, \frac{t_{i_1} }{i_1!}\,\cdots \,\frac{t_{i_k}}{i_k!} \,, \label{explicitv1216}\\
& \cH_0({\bf t}) \= \frac12 \, \sum_{i,j\geq0} \, \tilde t_i \, \tilde t_j \, \frac{v({\bf t})^{i+j+1}}{i! \, j! \, (i+j+1)} \,.
\end{align}
Since $t_i(x,{\bf 0})=\delta_{i,0} \, x+\delta_{i,1}-(-\frac{1}{2})^{i-1}$, we find
 \begin{align}
 v\big({\bf t}(x,{\bf 0})\big)
\=&-\,2\,\log\Big(-\frac{x}{2}\Big) \,, \label{iniv1216}
 \end{align}
and therefore, 
\begin{align}
	R_0(x)\=& A(x,{\bf 0})\,-\,\frac{1}{8}\,\sum_{i,j\geq 0}\,\tilde t_{i}(x,{\bf 0})\,\tilde t_{j}(x,{\bf 0})\, \frac{v\big({\bf t}(x,{\bf 0})\big)^{i+j+1}}{i!\,j!\,(i+j+1)}
	\= \frac{x^2}{4} \, \log \Bigl(-\frac{x}{2}\Bigr) \,-\, \frac38 \, x^2.
\end{align}

Let us now compute $R_1(x)$. It is known e.g. in~\cite{DLYZ1,DLYZ2,DY1} that 
\beq\label{cH1exp}
\cH_1({\bf t}) \= \frac1{24} \, \log\bigl(v_{t_0}({\bf t})\bigr) \,-\, \frac1{16} \, v({\bf t})\,.
\eeq
It then follows from \eqref{iniv1216} and the fact $\p_x=\p_{t_0}$ that 
\beq
R_1(x) 
\=\frac1{12} \, \log \Big(-\frac{x}{2}\Big)  \,.
\eeq

Using Lemma~\ref{dilatonlemma} we find that for $g\geq 2$ the element $R_g(x)$ has the form
$R_g(x) = r_g/x^{2g-2}$ for some $ r_g\in \CC$.
Define 
\beq
w \= w(x, {\bf T};\hbar) \:= \Bigl( 2 \,-\, \Lambda^{\frac12} \,-\, \Lambda^{-\frac12} \Bigr) \, \bigl(\cH\bigl({\bf t}(x, {\bf T}); \hbar \, \sqrt{-4}\bigr)\bigr) \,. 
\eeq
where $\Lambda:=\exp (2 \, \hbar \, \sqrt{-2} \, \p_x)$. Noticing that 
\beq
\frac{\p}{\p T_1} \= \sum_{i\geq0} \, \frac{1}{(-2)^{i-1}} \, \frac{\p }{\p t_i} \,,
\eeq
we know from the Hodge--FVH correspondence~\cite{LYZZ, LZZ} that $w(x, {\bf T};\hbar)$ satisfies the following equation:
\beq\label{Bogo1216}
- \, \frac14 \, \frac{\p w}{\p T_1} \= \frac{1}{\hbar \, \sqrt{-2}} \, \frac{2-\Lambda^{\frac12}-\Lambda^{-\frac12}}{\Lambda^{-\frac12}-\Lambda^{\frac12}} \, \bigl(e^{w}\bigr) \,. 
\eeq
Using the $m=0$ case of~\eqref{viraaZ1216} we find that 
\beq\label{ini3rd1216}
\hbar^2 \, \frac{\p^3 \cH\bigl({\bf t}(x, {\bf T}); \hbar \, \sqrt{-4}\bigr)}{\p x \p x \p T_1} \bigg|_{{\bf T}=0} \;\equiv\; \frac12 \,. 
\eeq
Denote $W(x,\hbar)=w(x, {\bf 0};\hbar)$. Using the expressions of $R_0(x)$ and $R_1(x)$ we know that  
$W(x,\hbar) = \log \bigl(- \, \frac{x}2 \bigr) + {\rm O}\bigl(\hbar^4\bigr)$. 
 It follows from~\eqref{Bogo1216} and~\eqref{ini3rd1216} that 
\beq
  \p_x \Bigl(e^{W(x,\hbar)}\Bigr) \=  - \, \frac12 \,.
\eeq
We therefore conclude that, for $g\geq2$, 
$r_g = \frac{(-1)^g \, 2^{g-1} \, B_{2g}}{2g \, (2g-2)} $. The theorem is proved. 
\end{proof} 

Since for any fixed parameter~$x$, the power series $Z(x,{\bf T};\hbar)$ is a 
 tau-function for the KdV hierarchy~\cite{MMS} (cf.~\cite{Dickey, DYZ}) we obtain immediately from 
the Hodge-BGW correspondence~\eqref{mainid} the following corollary. 
\begin{cor} \label{corkdv}
For any fixed~$x$, the power series $e^{\frac{A(x, {\bf T})}{\hbar^2}} Z_{\rm H}\bigl({\bf t}(x,{\bf T}); \hbar \, \sqrt{-4} \bigr)$ is a
 tau-function for the KdV hierarchy~\eqref{KdVThbar} associated to the solution specified by the initial value 
 $$\frac{x^2}4 \, \frac{1}{(1-T_1)^2} \+ \frac{\hbar^2}{8} \, \frac{1}{(1-T_1)^2}\,.$$ 
\end{cor}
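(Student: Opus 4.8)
The plan is to read the statement off directly from the Main Theorem, using the integrable structure of the generalized BGW partition function recalled in Section~\ref{section2.2}; the only genuine computation is a one-line substitution that pins down the initial value. First I would apply the Hodge--BGW correspondence~\eqref{mainid} to rewrite the power series under consideration as $Z(x,{\bf T};\hbar)$. By its definition~\eqref{def:gbgw-part} one has $Z(x,{\bf T};\hbar)=e^{B(x,\hbar)}\,Z_{\rm gBGW}\bigl(\tfrac{x}{\hbar\sqrt{-2}},{\bf T};\hbar\bigr)$, and the prefactor $B(x,\hbar)$ from~\eqref{eqn:bernoulli} is independent of ${\bf T}$. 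Thus, for fixed $x$, the left-hand side of~\eqref{mainid} differs from $Z_{\rm gBGW}\bigl(\tfrac{x}{\hbar\sqrt{-2}},{\bf T};\hbar\bigr)$ only by the scalar $e^{B(x,\hbar)}$.

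Next I would invoke the fact, recalled in Section~\ref{section2.2} from~\cite{BR,MMS}, that $Z_{\rm gBGW}(N,{\bf T};\hbar)$ is a tau-function of the KdV hierarchy~\eqref{KdVThbar}. Since multiplication by a ${\bf T}$-independent scalar preserves the KdV tau-function property and leaves the associated solution untouched (because $\p_{T_1}^2 B(x,\hbar)=0$), the left-hand side of~\eqref{mainid} is again a KdV tau-function, and its solution is
\[
\hbar^2\,\p_{T_1}^2\log\!\Bigl(e^{\frac{A(x,{\bf T})}{\hbar^2}}\,Z_{\rm H}\bigl({\bf t}(x,{\bf T});\hbar\sqrt{-4}\bigr)\Bigr)
=u_{\rm gBGW}\bigl(\tfrac{x}{\hbar\sqrt{-2}},{\bf T};\hbar\bigr),
\]
which satisfies~\eqref{KdVThbar}. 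It then remains to identify the initial datum along the $T_1$-axis. Substituting $N=\tfrac{x}{\hbar\sqrt{-2}}$, equivalently $N^2=-\tfrac{x^2}{2\hbar^2}$ (see~\eqref{sashaconst}), into the closed form~\eqref{initialugBGW202218} and using $\tfrac18-\tfrac{N^2}2=\tfrac18+\tfrac{x^2}{4\hbar^2}$, I obtain
\[
u_{\rm gBGW}\bigl(\tfrac{x}{\hbar\sqrt{-2}},T_1,0,0,\dots;\hbar\bigr)
=\frac{x^2}{4}\,\frac{1}{(1-T_1)^2}+\frac{\hbar^2}{8}\,\frac{1}{(1-T_1)^2},
\]
which is exactly the claimed initial value; as a solution of the KdV hierarchy is determined by its restriction to the $T_1$-axis, this completes the argument.

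There is no serious analytic obstacle here. The one conceptual point worth flagging is that the quadratic-in-${\bf T}$ factor $e^{A(x,{\bf T})/\hbar^2}$ would, in isolation, generically destroy the KdV tau-function property of $Z_{\rm H}\bigl({\bf t}(x,{\bf T});\hbar\sqrt{-4}\bigr)$; the Main Theorem is exactly what guarantees that this factor conspires with the Hodge partition function to reproduce $Z(x,{\bf T};\hbar)$, whose only deviation from the genuine tau-function $Z_{\rm gBGW}$ is the harmless ${\bf T}$-independent scalar $e^{B(x,\hbar)}$. Once this is recognized, both the tau-function property and the initial value follow with no further work.
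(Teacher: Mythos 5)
Your proposal is correct and follows essentially the same route as the paper: identify the left-hand side with $Z(x,{\bf T};\hbar)$ via the Main Theorem, invoke the fact from~\cite{MMS} that this is a KdV tau-function for fixed~$x$, and read off the initial value by substituting $N=\tfrac{x}{\hbar\sqrt{-2}}$ into~\eqref{initialugBGW202218}. The substitution check $\hbar^2\bigl(\tfrac18-\tfrac{N^2}2\bigr)=\tfrac{\hbar^2}8+\tfrac{x^2}4$ is exactly what the paper leaves implicit.
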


\section{ELSV-like formula for the generalized BGW correlators}\label{section4}
In this section, we prove Proposition~\ref{thm:gbgw-elsv}. 

\begin{proof}[Proof of Proposition~\ref{thm:gbgw-elsv}]
From the definition~\eqref{cubichodgeint} of the cubic Hodge free energy  
and using~\eqref{deftT} 
we obtain that 
\begin{align}
 &\cH\bigl({\bf t}(x, {\bf T});\hbar\,\sqrt{-4}\bigr)\,\nn\\
\=& \sum_{g\geq0}\, (-4)^{g-1}\,\hbar^{2g-2}\,\sum_{n\geq 0}\,\frac{1}{n!} \,\int_{\overline{\mathcal{M}}_{g,n}} \Lambda(-1)^2 \, \Lambda\Bigl(\frac{1}{2}\Bigr)  \,\prod_{p=1}^n \,\biggl(x+2+\frac{\psi_p^2}{2+\psi_p} -2\sum_{a\geq 0}\frac{T_{2a+1}/a!}{1+\frac{2a+1}{2}\psi_p}\biggr)\, \nn \\
  \=& \sum_{g\geq0}\, (-4)^{g-1}\,\hbar^{2g-2}\,\sum_{n\geq 0}\,\frac{1}{n!}\, \int_{\overline{\mathcal{M}}_{g,n}}  \Lambda(-1)^2 \, \Lambda\Bigl(\frac{1}{2}\Bigr) \,
  \sum_{l=0}^{n}\,\binom{n}{l}\prod_{p=l+1}^{n}\, \biggl( x+2+\frac{\psi_p^2}{2+\psi_p}\biggr) \nn\\
  & \times \sum_{k_1,\cdots, k_l}\,(-2)^l \, \prod_{p=1}^l \, \frac{T_{2a_p+1}/a_p!}{1+\frac{2a_p+1}{2}\psi_p} \,. \label{TaylorHT}
\end{align}

For $g,l\geq0$ and for $a_1,\dots,a_l\geq0$, by comparing the coefficients of $\hbar^{2g-2} T_{2a_1+1}\cdots T_{2a_l+1}$ 
of the logarithms of both sides of~\eqref{mainid} and using \eqref{genusexpansion}, \eqref{defA1215}, \eqref{TaylorHT} we get
\begin{align}
	&c_g(a_1,\dots,a_l) \, x^{2|a|-2g+2} \, \delta_{l\geq1} \+  
	\biggl(\frac{x^2}{4} \, \log \Bigl(-\frac{x}{2}\Bigr) - \frac38 \, x^2\biggr) \, \delta_{g,0} \, \delta_{l,0} \,  \+ \frac1{12} \, \log \Big(-\frac{x}{2}\Big) \,  \delta_{g,1} \, \delta_{l,0} \nn\\
	& \+ \frac{1}{x^{2g-2}}\frac{(-1)^g \, 2^{g-1} \, B_{2g}}{2g \, (2g-2)} \, \delta_{g\geq2} \, \delta_{l,0} \nn\\
	\= &\frac{(-1)^{g-1+l} \, 2^{2g-2+l}}{\prod_{p=1}^l a_p!} \, \sum_{k\geq  0} \, \frac{(x+2)^k}{k!} \, \sum_{n'\geq 0} \, \frac{1}{n'!\,2^{n'}} \,\nn\\
	& \, \times \int_{\overline{\mathcal{M}}_{g,l+k+n'}}  \Lambda(-1)^2 \, \Lambda\Bigl(\frac{1}{2}\Bigr) \,
	\prod_{p=1}^l \,\frac{1}{1+\frac{2a_p+1}{2}\psi_p}\,
	\prod_{q=l+k+1}^{l+k+n'} \,\frac{\psi_q^2}{1+\frac{1}{2}\psi_q} \nn\\
	& \+\delta_{g,0} \, \delta_{l,2} \, \frac{1}{a_1! \, a_2! \, (a_1+a_2+1)} 
	\,-\, \delta_{g,0} \, \delta_{l,1} \, \frac{1}{a_1! \, (2a_1+1)} \, \bigg(x+\frac{1}{a_1+1}\bigg)\,. 
	\label{cghodge}
\end{align}

For $l\geq 1$, by comparing the coefficients of $(x+2)^{2|a|-2g+2}$ of both sides of~\eqref{cghodge} we obtain 
\begin{align}
	&c_{g}(a_1,\cdots,a_l) 
	\= \frac{(-1)^{g-1+l} \, 2^{2g-2+l}}{(2|a|-2g+2)! \, \prod_{p=1}^la_p!} \, \sum_{n'\geq0} \, \frac{1}{n'! \, 2^{n'}} \,\nn\\
	&\qquad\times \int_{\overline{\mathcal{M}}_{g,l+2|a|-2g+2+n'}} \Lambda(-1)^2 \, \Lambda\Bigl(\frac{1}{2}\Bigr) 
	 \, \prod_{p=1}^l\, \frac{1}{1+\frac{2a_p+1}{2}\psi_p}
	 \, \prod_{q=l+2|a|-2g+2+1}^{l+2|a|-2g+2+n'}\, \frac{\psi_q^2}{1+\frac{1}{2}\psi_q} \,.
\end{align}
Similarly, for $l=0$ and $g\ge2$, by comparing the coefficients of $(x+2)^0$ of both sides of~\eqref{cghodge} we obtain  
\begin{align}
\frac{(-1)^g \, 2^{g-1} \, B_{2g}}{2g \, (2g-2)} \= & (-1)^{g-1} \, 4^{2g-2} \, 
\sum_{n'\ge0} \, \frac{1}{n'! \, 2^{n'}} \, \int_{\overline{\mathcal{M}}_{g,n'}}  \Lambda(-1)^2 \, \Lambda\Bigl(\frac{1}{2}\Bigr) \, 
	\prod_{q=1}^{n'} \frac{\psi_q^2}{1+\frac{1}{2}\psi_q}\,.
\end{align}

Consider the forgetful map
\beq
f: \overline{\mathcal M}_{g,n+n'} \to \overline{\mathcal M}_{g,n}
\eeq
forgetting the last $n'$ marked points. It is known from~\cite{AC, KMZ, LX, MZ} that 
\begin{align}
& \sum_{n'\geq 1} \, \frac{1}{n'!} \, \sum_{d_1,\cdots,d_{n'}\geq 1} \, 
 f_{*}\bigg(\Lambda(-1)^2 \, \Lambda\Bigl(\frac{1}{2}\Bigr) 
	 \, \prod_{p=1}^l\, \frac{1}{1+\frac{2a_p+1}{2}\psi_p}\,\prod_{r=1}^{n'}\psi_{n+r}^{d_r+1}\bigg)\, \prod_{r=1}^{n'} b_{d_r}({\bf s}) \nn\\
& \=  e^{\sum_{d\geq 1}s_d\kappa_d}  \, \Lambda(-1)^2 \, \Lambda\Bigl(\frac{1}{2}\Bigr) 
	 \, \prod_{p=1}^l\, \frac{1}{1+\frac{2a_p+1}{2}\psi_p}\,,\label{eqn:t-kappa}
\end{align}
where ${\bf s}=(s_1,s_2,\dots)$ is an infinite vector of indeterminates and $b_d({\bf s})$ are polynomials of ${\bf s}$ defined via 
\beq
\exp\biggl(-\sum_{d\geq 1} \, s_d \, z^d\biggr) \= 1 \, - \, \sum_{d\geq 1} \, b_d({\bf s}) \, z^d \,.
\eeq
(This was also used in~\cite{GF} to derive the ELSV-like formula for even GUE correlators from the Hodge-GUE correspondence.)
By solving the following system of equations
\beq
-b_d({\bf s}) \= \Bigl(-\frac{1}{2}\Bigr)^d\,, \quad d\geq 1 \,, 
\eeq
we get $s_d=-\frac{1}{d}(-\frac{1}{2})^d$, thus we obtain identities~\eqref{elsvgen1215} and~\eqref{elsvgen21215}. 
\end{proof}

In the above proof we get identities~\eqref{cghodge} (which hold in $\CC[[x+2]]$), 
and obtained Proposition~\ref{thm:gbgw-elsv} from these identities.  We note that one can also get from~\eqref{cghodge} 
infinitely many other identities by comparing coefficients of powers of $x+2$; for example, for $l\geq1$ and $a_1,\dots,a_l\geq0$,  
one can obtain vanishing identities (cf.~also~\cite{DY3}) 
for certain combinations of intersection numbers by taking 
coefficients of $(x+2)^k$, for $k\geq2|a|-2g+3$.

We hope that the ELSV-like formula~\eqref{eqn:bgw-elsv} for the BGW correlators could be helpful for 
understanding Norbury's $\Theta$-class integrals~\cite{KN, N}.

\medskip

\noindent {\bf Acknowledgements.} 
One of the authors Q.Z. would like to thank Jian Zhou and Shuai Guo for very helpful discussions. 
Part of the work of Q.Z. was done during his visits to USTC; he thanks USTC for warm hospitality. 
The work is partially supported by the National Key R and D Program of China 2020YFA0713100, 
and by NSFC No.~12061131014. 

\medskip

\begin{appendix}
\section{Three more applications}\label{appa}
In this appendix, we give three more applications of the Hodge-BGW correspondence. 

Introduce a power series $Q=Q(x,\bbT)\in \CC[[x+2]][[\bbT]]$ by 
\beq\label{vQ1215}
Q(x,\bbT) \= \exp\biggl(-\frac{v({\bf t}(x,\bbT))}{2}\biggr) \,.
\eeq

\begin{lemma}
The power series $Q$ is the unique solution in $\CC[[x+2]][[\bbT]]$ to the initial value problem of the following integrable hierarchy:
\begin{align}
&\frac{\p Q}{\p T_{2a+1}} \= - \, \frac{2}{a!} \, Q^{2a+1} \, \frac{\p Q}{\p x} \,, \quad q\geq 0\,,\label{flow-q}\\
&Q(x,{\bf 0}) \= - \, \frac{x}{2}\,.  \label{ini-Q1215}
\end{align}
Moreover, $Q$ satisfies the equation
	\beq\label{euler-lagrange-Q}
	Q \= - \, \frac{x}{2} \+ \sum_{a\geq0} \, {T}_{2a+1} \, \frac{Q^{2a+1}}{a!} \,.
	\eeq
\end{lemma}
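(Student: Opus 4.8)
The plan is to derive every assertion from the classical genus-zero hodograph relation satisfied by $v$, namely
\[
v \= \sum_{k\geq 0} \, t_k \, \frac{v^k}{k!}\,,
\]
which is equivalent, by Lagrange inversion, to the explicit expansion \eqref{explicitv1216} and is a standard characterization of the genus-zero Witten--Kontsevich function. I would record this identity first; it is the only nontrivial input, and everything else is formal manipulation followed by differentiation.

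Next I would substitute $\mathbf{t}=\mathbf{t}(x,\mathbf{T})$ from \eqref{deftT} into this relation and resum. Using $(-\tfrac12)^{k-1}=-2(-\tfrac12)^{k}$ and $\bigl(-\tfrac{2a+1}{2}\bigr)^{k}$, the four pieces of $t_k(x,\mathbf{T})$ turn the right-hand side into
\[
\sum_{k\geq 0} t_k(x,\mathbf{T})\,\frac{v^k}{k!}
\= x \+ v \+ 2\,e^{-v/2} \,-\, 2\sum_{a\geq 0}\frac{T_{2a+1}}{a!}\,e^{-(2a+1)v/2}\,.
\]
Since the left-hand side equals $v$ by the hodograph relation, the $v$-terms cancel, and recalling $Q=e^{-v/2}$ from \eqref{vQ1215} (so that $e^{-(2a+1)v/2}=Q^{2a+1}$) the identity collapses to $0=x+2Q-2\sum_{a\geq0}T_{2a+1}Q^{2a+1}/a!$, which is exactly the algebraic equation \eqref{euler-lagrange-Q}. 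Setting $\mathbf{T}=\mathbf{0}$ then gives $Q(x,\mathbf{0})=-x/2$, consistent with \eqref{iniv1216}; this proves \eqref{ini-Q1215} and the ``moreover'' part.

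With \eqref{euler-lagrange-Q} in hand, the flows \eqref{flow-q} follow by implicit differentiation. Differentiating \eqref{euler-lagrange-Q} in $x$ and in $T_{2a+1}$ and writing $D:=1-\sum_{b\geq0}\tfrac{(2b+1)T_{2b+1}}{b!}Q^{2b}$, I obtain $\partial_x Q=-1/(2D)$ and $\partial_{T_{2a+1}}Q=Q^{2a+1}/(a!\,D)$; eliminating $D$ gives $\partial_{T_{2a+1}}Q=-\tfrac{2}{a!}Q^{2a+1}\partial_x Q$, which is \eqref{flow-q}. Finally, uniqueness in $\mathbb{C}[[x+2]][[\mathbf{T}]]$ follows by a standard recursion: writing $Q=\sum_{\mathbf{m}}Q_{\mathbf{m}}(x)\,\mathbf{T}^{\mathbf{m}}$, the initial condition fixes $Q_{\mathbf{0}}=-x/2$, and for any multi-index with $|\mathbf{m}|\geq1$ one selects an index $a$ with $m_a\geq1$ and reads $Q_{\mathbf{m}}$ off the $\mathbf{T}^{\mathbf{m}-\mathbf{e}_a}$-coefficient of \eqref{flow-q}, whose right-hand side involves only coefficients of total degree $<|\mathbf{m}|$; induction on $|\mathbf{m}|$ then forces all coefficients.

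I expect the main obstacle to be the second step: arranging the resummation so that the infinite sum over $k$ produces precisely the exponentials $e^{-(2a+1)v/2}=Q^{2a+1}$ while the $v$-linear contributions cancel. This is where the specific dilaton-type shift $\delta_{i,1}$ and the geometric substitution \eqref{deftT} enter essentially; once the collapse to \eqref{euler-lagrange-Q} is achieved, the remaining assertions are routine differentiation and a formal-power-series induction.
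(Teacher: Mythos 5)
Your proof is correct, but it runs in the opposite logical order from the paper's. The paper first obtains the flows \eqref{flow-q} by pushing the dispersionless KdV equations $v_{t_i}=\frac{v^i}{i!}v_{t_0}$ through the change of variables \eqref{deftT} (the chain rule gives $\p_{T_{2a+1}}v=-\frac{2}{a!}e^{-(2a+1)v/2}v_x$, whence \eqref{flow-q} via $Q=e^{-v/2}$), takes \eqref{ini-Q1215} from the already established \eqref{iniv1216}, and leaves \eqref{euler-lagrange-Q} as an ``elementary exercise.'' You instead start from the hodograph relation $v=\sum_k t_k v^k/k!$ (the Lagrange-inversion form of \eqref{explicitv1216}), resum it under the substitution \eqref{deftT} to land directly on the algebraic equation \eqref{euler-lagrange-Q}, and then recover the initial condition by setting $\bbT=\mathbf{0}$ and the flows by implicit differentiation. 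The two routes use equivalent characterizations of the genus-zero Witten--Kontsevich function, so neither is more powerful; yours has the merit of making the ``elementary exercise'' explicit, of deriving \eqref{iniv1216} rather than quoting it, and of supplying the uniqueness argument (a degree-by-degree recursion in $\bbT$) which the paper leaves implicit. Your resummation computation is right: the $\delta_{i,1}$ term cancels the linear $v$ on the left, the $-(-\tfrac12)^{i-1}$ piece resums to $2e^{-v/2}$, and the $T$-dependent piece to $-2\sum_a T_{2a+1}Q^{2a+1}/a!$, giving $0=x+2Q-2\sum_a T_{2a+1}Q^{2a+1}/a!$. The only point worth adding is a sentence justifying that the substitution $\mathbf{t}\mapsto\mathbf{t}(x,\bbT)$ and the subsequent resummation are legitimate in $\CC[[x+2]][[\bbT]]$: since $v(\mathbf{t}(x,\bbT))$ has no constant term in the ideal generated by $x+2$ and $\bbT$, the series $\sum_k t_k v^k/k!$ and the exponentials $e^{-(2a+1)v/2}$ are formally convergent, so the rearrangement is valid.
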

\begin{proof} 
Equalities~\eqref{flow-q} follow from~\eqref{vQ1215} and the dispersionless KdV hierarchy
\beq v_{t_i}\= \frac{v^i}{i!} \, v_{t_0}\,; \eeq
cf.~\eqref{explicitv1216}.
The property~\eqref{ini-Q1215} follows from the already proved formula~\eqref{iniv1216}, and 
equality~\eqref{euler-lagrange-Q} is then an elementary exercise.
\end{proof}

We note that solution in $\CC[[x+2]][[\bbT]]$ to equation~\eqref{euler-lagrange-Q} is also unique. 
Furthermore, from \eqref{flow-q}, \eqref{ini-Q1215} we see that this unique solution~$Q$ actually belongs to 
$\mathbb C[x][[{\bf T}]]\subset\CC[[x+2]][[\bbT]]$.

The following proposition on $\F_0(x, \bbT)$ was proved in a joint work of Jian Zhou and the second-named author of the present paper (cf.~\cite{Z}) by constructing  
the constitutive relation \cite{DW,ZZ} of the generalized BGW model and by solving its Virasoro constraints. 
Here we give a new and short proof as an application of the Hodge-BGW correspondence~\eqref{mainid}.

\begin{prop}\label{prop-bgwf0}
The genus zero part of the generalized BGW free energy $\F_0(x, \bbT)$ admits the expression (cf.~\cite{Du1})
\begin{align}\label{dubrovin-f0}
\F_0(x, \bbT) \= & \frac12 \, \sum_{a,b\geq0} \, \widetilde {T}_{2a+1} \, \widetilde {T}_{2b+1} \, \frac{Q^{2a+2b+2}}{a!\,b!\,(a+b+1)}
		\,-\, x \, \sum_{b\geq0} \, \widetilde {T}_{2b+1} \, \frac{Q^{2b+1}}{b!\,(2b+1)}  \+  \frac{x^2}{4} \, \log Q \,.
\end{align}
Moreover, the following formula, which is conjectured by Alexandrov~\cite{A},  is true: 
	\begin{align}
	& \F_{0}(x,{\bf T}) \,-\, \biggl(\frac{x^2}{4} \, \log \Bigl(-\frac{x}{2}\Bigr) - \frac38 \, x^2\biggr) \+ \frac{x^2}4 \, \log (1-T_1) \nn \\
	& \= \sum_{\substack{k\geq1, \, j_1,j_2,\dots\geq0 \\ j_1+2j_2 +\cdots = k}} \, 
	\frac{(3j_1+5j_2+\cdots-1)!}{2^{2k+1} \, (1-T_1)^{3j_1+5j_2+\cdots}} \, \frac{x^{2k+2}}{(2k+2)!}\,
	\frac{T_{3}^{j_1}\,T_{5}^{j_2}\,\cdots}{(1!)^{j_1}\, j_1!\, (2!)^{j_2}\, j_2!\,\cdots} \, . \label{f0alex}
	\end{align}
\end{prop}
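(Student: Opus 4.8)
The plan is to deduce both formulas from the Hodge-BGW correspondence \eqref{mainid}. Taking logarithms in \eqref{mainid} and extracting the coefficient of $\hbar^{-2}$ (recall that with $\e=\hbar\sqrt{-4}$ the genus-$g$ Hodge term carries $\hbar^{2g-2}(-4)^{g-1}$) gives the clean identity $\F_0(x,\bbT)=A(x,\bbT)-\tfrac14\,\cH_0\bigl({\bf t}(x,\bbT)\bigr)$. To prove \eqref{dubrovin-f0} I would substitute into this the explicit genus-zero Hodge potential $\cH_0=\tfrac12\sum_{i,j}\tilde t_i\tilde t_j\,v^{i+j+1}/(i!\,j!\,(i+j+1))$ together with the values $\tilde t_i(x,\bbT)$ read off from \eqref{deftT}. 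Writing $\tfrac1{i+j+1}=\int_0^1 s^{i+j}\,ds$ turns the double sum into $\tfrac{v}{2}\int_0^1 g(vs)^2\,ds$, where $g(y)=\sum_i\tilde t_i\,y^i/i!=x+2e^{-y/2}-2\sum_a (T_{2a+1}/a!)\,e^{-(2a+1)y/2}$; since $Q=e^{-v/2}$ by \eqref{vQ1215}, the substitution $w=e^{-vs/2}$ reduces $\cH_0$ to the elementary integral $\int_Q^1 g(w)^2\,w^{-1}\,dw$ with $g(w)=x+2w-2\sum_a (T_{2a+1}/a!)\,w^{2a+1}$. Carrying out this integration produces contributions at the two endpoints; after adding $A(x,\bbT)$ from \eqref{defA1215}, the endpoint-$w{=}1$ (i.e.\ $Q$-independent) terms cancel against $A$ while the endpoint-$w{=}Q$ terms assemble precisely into the right-hand side of \eqref{dubrovin-f0}. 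As a check, at $\bbT={\bf 0}$ one recovers $\F_0(x,{\bf 0})=\tfrac{x^2}{4}\log(-\tfrac x2)-\tfrac38x^2$.

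For \eqref{f0alex} I would start from the identity \eqref{dubrovin-f0} just proved and first record its \emph{two-point} consequence. Differentiating \eqref{dubrovin-f0}, the coefficient of each $\p Q/\p T$ that arises is proportional to $QS-\tfrac x2$, where $S:=\sum_a\widetilde T_{2a+1}Q^{2a}/a!$, and this vanishes because the Euler--Lagrange relation \eqref{euler-lagrange-Q} yields $QS=x/2$. Hence
\[
\frac{\p^2\F_0}{\p T_{2c+1}\,\p T_{2d+1}}=\frac{Q^{2c+2d+2}}{c!\,d!\,(c+d+1)}\qquad(c,d\ge0),
\]
a quantity depending on $\bbT$ only through $Q$. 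Specializing $T_3=T_5=\cdots=0$ makes $Q$ equal to $Q_0=-x/\bigl(2(1-T_1)\bigr)$, and a direct evaluation of \eqref{dubrovin-f0} there gives $\F_0\big|_{T_3=T_5=\cdots=0}=\tfrac{x^2}{4}\log(-\tfrac x2)-\tfrac38x^2-\tfrac{x^2}{4}\log(1-T_1)$, which is exactly the combination subtracted on the left of \eqref{f0alex}; thus it remains to Taylor-expand $\F_0$ in $T_3,T_5,\dots$ at fixed $T_1,x$.

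To carry out that expansion I would differentiate \eqref{euler-lagrange-Q} to get $\p Q/\p T_{2c+1}=Q^{2c+1}/(c!\,D)$ with $D=1-\sum_a T_{2a+1}(2a+1)Q^{2a}/a!$, and use the two simplifications valid at $T_3=T_5=\cdots=0$: $D=1-T_1$ and $\p D/\p T_{2c+1}=-(2c+1)Q_0^{2c}/c!$ (every remaining term carries a factor $T_{2a+1}$ with $a\ge1$ and dies). Feeding these into the two-point formula and iterating --- equivalently, applying multivariate Lagrange inversion to \eqref{euler-lagrange-Q} --- I expect the $n$-point function at $T_3=T_5=\cdots=0$ to collapse to $\tfrac{2\,(2k+n-1)!}{(2k+2)!}\,Q_0^{2k+2}\big/\bigl(\prod_i c_i!\,(1-T_1)^{n-2}\bigr)$, where $k=\sum_i c_i$; substituting $Q_0^{2k+2}=x^{2k+2}/\bigl(2^{2k+2}(1-T_1)^{2k+2}\bigr)$ and reading off the coefficient of $\prod_a T_{2a+1}^{j_a}/j_a!$ then matches \eqref{f0alex} term by term (the small cases $n=1,2,3,4$ are readily checked). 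The main obstacle is precisely this combinatorial step: establishing the $n$-point formula for all $n$, i.e.\ controlling the factorial ratio $(2k+n-1)!/(2k+2)!$ and the exact powers of $2$ and of $1-T_1$, which I would settle by an induction on $n$ resting only on the three facts recorded above, or by a direct appeal to the Lagrange--Good inversion formula.
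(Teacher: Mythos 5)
Your derivation of \eqref{dubrovin-f0} is correct and takes a mildly different route from the paper's. You evaluate $-\tfrac14\,\cH_0({\bf t}(x,\bbT))+A(x,\bbT)$ directly, via $\tfrac1{i+j+1}=\int_0^1 s^{i+j}\,ds$ and the substitution $w=e^{-vs/2}$, reducing $\cH_0$ to $\int_Q^1\bigl(x-2\sum_a\widetilde T_{2a+1}w^{2a+1}/a!\bigr)^2\,\tfrac{dw}{w}$; the endpoint terms at $w=1$ do cancel against $A(x,\bbT)$ and the $w=Q$ terms give \eqref{dubrovin-f0}. The paper instead first invokes the genus-zero dilaton equation to express $\F_0$ through its second derivatives and then only needs the two-point formulas \eqref{Q1}--\eqref{Q3}. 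Both arguments are sound; yours trades the dilaton equation for an elementary integration.

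The second half, however, has a genuine gap, which you flag yourself. Everything you actually verify is correct: the two-point function $\p^2\F_0/\p T_{2c+1}\p T_{2d+1}=Q^{2c+2d+2}/(c!\,d!\,(c+d+1))$ via $QS=x/2$, the value of $\F_0$ at $T_3=T_5=\cdots=0$, and the derivative formulas for $Q$. But the entire content of \eqref{f0alex} is then concentrated in the asserted $n$-point collapse $\p^n\F_0/\p T_{2c_1+1}\cdots\p T_{2c_n+1}\big|_{T_3=T_5=\cdots=0}=2\,(2k+n-1)!\,Q_0^{2k+2}\big/\bigl((2k+2)!\,\prod_ic_i!\,(1-T_1)^{n-2}\bigr)$, which you only ``expect'' and propose to establish by an unspecified induction or by Lagrange--Good inversion. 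That formula is exactly equivalent to \eqref{f0alex} (substituting $Q_0^{2k+2}=x^{2k+2}/(2^{2k+2}(1-T_1)^{2k+2})$ and $3j_1+5j_2+\cdots=2k+n$ reproduces the right-hand side verbatim), so as written you have reduced the statement to an equivalent unproved combinatorial identity rather than proved it. The paper closes this step with a much lighter tool: a single application of one-variable Lagrange inversion to \eqref{euler-lagrange-Q} yields the closed-form expansion of $Q$ in powers of $x$, and \eqref{f0alex} then follows by integrating the $b=0$ case of your two-point relation, namely $\p_x\p_{T_1}\F_0=-Q$, once in $x$ and once in $T_1$, with the integration constants fixed by \eqref{Q1}--\eqref{Q2}. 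Replacing your $n$-point induction by this integration of $-Q$ would close the gap and turns the multivariate combinatorics into a one-line manipulation of factorials.
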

\begin{proof}
The genus zero part of the dilaton equation~\eqref{eqn:dilaton} reads:
	\beq
	2\,\F_0(x,\bbT)\=x\,\frac{\p \F_0(x,\bbT)}{\p x}\+\sum_{a\geq 0} \, \widetilde T_{2a+1}\,\frac{\p \F_0(x,\bbT)}{\p T_{2a+1}} \,,
	\eeq
from which one deduces that 
	\begin{align}\label{f0dilaton1215}
		\F_0(x,\bbT) \= & \frac12 \, \sum_{a,b\geq0} \, \widetilde {T}_{2a+1} \, \widetilde {T}_{2b+1} \, \frac{\p^2 \F_0(x,\bbT)}{\p T_{2a+1} \,\p T_{2b+1}}
		+ \,x \, \sum_{a\geq0} \, \widetilde {T}_{2a+1} \, \frac{\p^2 \F_0(x,\bbT)}{\p x\, \p T_{2a+1}}  \+  \frac{x^2}{4} \, \frac{\p^2 \F_0(x,\bbT)}{\p x^2} \,.
	\end{align}
	
By the Hodge-BGW correspondence~\eqref{mainid}, we know that 
	\beq
	\F_0(x,\bbT)\= - \, \frac14 \, \cH_0\big({\bf t}(x,\bbT)\big) \+ A(x,\bbT) \,.
	\eeq
Then by using~\eqref{deftT}, \eqref{vQ1215} and the well-known fact that 
\beq
\frac{\p^2\cH_0({\bf t})}{\p t_i \p t_j} \= \frac{v({\bf t})^{i+j+1}}{i! \, j! \, (i+j+1)}\,, \quad i,j\geq0 \,,
\eeq 
with $v({\bf t})$ being defined in~\eqref{explicitv1216},
we obtain that 
\begin{align}
	\frac{\p^2 \F_0(x,\bbT) }{\p T_{2a+1} \p T_{2b+1} }
		& \= \frac{Q^{2a+2b+2}}{a!\, b!\,(a+b+1)}\,, \label{Q1} \\
	\frac{\p^2\F_0(x,\bbT)}{\p x \p T_{2b+1} } & 
\= - \, \frac{Q^{2b+1}}{b! \, (2 b+1)}\,, \label{Q2} \\
	\frac{\p^2\F_0(x,\bbT)}{\p x \p x} & \= \frac12\, \log Q \,. \label{Q3}
	\end{align}
Here $a,b\geq0$.
Substituting \eqref{Q1}--\eqref{Q3} into~\eqref{f0dilaton1215} we obtain~\eqref{dubrovin-f0}. 
 
By using~\eqref{euler-lagrange-Q} and the Lagrangian inversion formula, we find that  
	\beq
	Q\= -\, \sum_{\substack{k, j_1,j_2,\cdots\geq 0\\ j_1+2j_2\cdots =2k}}\, 
	\frac{(3j_1+5j_2+\cdots)!}{2^{2k+1} \, (1-T_1)^{1+3j_1+5j_2+\cdots}} \, \frac{x^{2k+1}}{(2k+1)!}\,
	\frac{T_{3}^{j_1}\,T_{5}^{j_2}\,\cdots}{(1!)^{j_1}\, j_1!\, (2!)^{j_2}\, j_2!\,\cdots} \, .
	\eeq
The proposition is then proved by integrating the $b=0$ identity of~\eqref{Q2} with respect to $x$ and $T_1$, where 
the integration constants can be fixed by using \eqref{Q1}, \eqref{Q2}.
\end{proof}

We note that formula~\eqref{f0alex} was also given in~\cite{OS}. 

\medskip

The next application of the Hodge-BGW correspondence is on a way of calculating 
$\F_g(x,{\bf T})$ with $g\geq1$. Recall from e.g.~\cite{DLYZ1, DY2} that 
the genus~$g$ part of the Hodge free energy $\cH_g({\bf t})$, $g\geq1$, possesses the following jet representation: 
for $g=1$, it is given explicitly by~\eqref{cH1exp}, and for $g\geq2$, there exist functions 
$H_g(v_1,\dots,v_{3g-2})$ of $(3g-2)$ variables, depending polynomially in $v_2,\dots,v_{3g-2}$ and rationally in~$v_1$, such that  
\begin{align}
& \cH_g({\bf t}) \= H_g\biggl(\frac{\p v({\bf t})}{\p t_0},\dots,\frac{\p^{3g-2} v({\bf t})}{\p t_0^{3g-2}}\biggr) \,, \label{jethodge1219}\\
& \sum_{k\geq1} \, k\, v_k \, \frac{\p H_g}{\p v_k} \= (2g-2) \, H_g \,. \label{eqn:dilatonHg}
%& \sum_{k\geq1} \, (k-1) \, v_k \, \frac{\p H_g}{\p v_k} \= (3g-3) \, H_g \,.  \clr{Wrong}
\end{align}
Moreover, $H_1(v_0,v_1):=\frac1{24} \log (v_1)-\frac1{16} v_0$, and $H_g(v_1,\dots,v_{3g-2})$, $g\ge2$, can be computed by using the 
Dubrovin--Zhang type loop equation~\cite{DLYZ2}. 
Noticing that according to the Hodge-BGW correspondence~\eqref{mainid}, 
\beq\label{HodgeBGWggeq1}
\F_g(x,\bbT) \= (-4)^{g-1} \, \cH_g\big({\bf t}(x, \bbT)\big) \,, \quad g\geq1\,,
\eeq
and using the simple fact from~\eqref{deftT} that $\p_x=\p_{t_0}$, 
we can translate the Dubrovin--Zhang type loop equation for $H_g(v_1,\dots,v_{3g-2})$ to that for higher genus 
parts of the generalized BGW free energy. 
More precisely, we arrive at the following proposition. 
\begin{prop} \label{loopforBGW1219}
Let $u(x,\bbT):= - \,2\, \log Q(x,\bbT)$.  
For any $g\geq1$, the genus~$g$ part of the generalized BGW free energy admits the jet representation: 
for $g=1$, 
\beq\label{eqn:bgwf1-hodgejet}
\F_1(x,\bbT) \= \frac1{24} \log \biggl(\frac{\p u(x,\bbT)}{\p x}\biggr) \,-\, \frac1{16} \, u(x,\bbT)  \,.
\eeq
and for $g\geq2$, there exists a function of $3g-2$ variables $F_g(u_1,\dots,u_{3g-2})$ such that 
\beq\label{eqn:bgwfg-hodgejet}
\F_g(x,\bbT) \= F_g \biggl(\frac{\p u(x,\bbT)}{\p x},\dots,\frac{\p^{3g-2} u(x,\bbT)}{\p x^{3g-2}}\biggr) \,.
\eeq
Moreover, denote $\p=\sum_{j\geq0} u_{j+1} \frac{\p }{\p u_j}$, and let 
\beq
B \= \sqrt{1-\frac{4\,e^{u_0}}{\lambda}}\,, \qquad \Delta F \= \sum_{g\geq 1} \, \hbar^{2g-2} \, F_g(u_1,\dots,u_{3g-2}) \, ,
\eeq
then $\Delta F$ satisfies the following Dubrovin--Zhang type loop equation:
	\begin{align}
		&\sum_{k\geqslant 0} \bigg(\p^k\Bigl(\frac{1}{B^2}\Bigr) \+ 
		\sum_{j=1}^{k}\binom{k}{j} \, \p^{j-1}\Bigl(\frac{1}{B}\Bigr) \, \p^{k-j+1}\Bigl(\frac{1}{B}\Bigr)\bigg) \, \frac{\p \Delta F}{\p  u_k} \nn \\
		&\+ 2\,\hbar^2 \, \sum_{k,l\geqslant 0} \, \p^{k+1}\Bigl(\frac{1}{B}\Bigr) \, \p^{l+1}\Bigl(\frac{1}{B}\Bigr) \, 
		\bigg(\frac{\p^2 \Delta F}{\p  u_k \p  u_l} \+ \frac{\p \Delta F}{\p  u_k} \, \frac{\p \Delta F}{\p  u_l}\bigg)\,\nn\\
		&\+ 2\,\hbar^2\sum_{k\geq 0} \p^{k+2} \Bigl(\frac{1}{8\, B^4}-\frac{1}{4\, B^2}\Bigr) \, \frac{\p \Delta F}{\p  u_k} \+\frac{1}{8\,B^2} \,-\,\frac{1}{16\,B^4} \= 0 \,.
	\end{align}
\end{prop}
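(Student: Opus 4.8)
The plan is to reduce everything to the Hodge side through the correspondence \eqref{HodgeBGWggeq1} and then transport the already-available Dubrovin--Zhang type loop equation for the cubic Hodge free energy. First I would record the two elementary identities that make this translation exact. From $u := -2\log Q$ and the definition \eqref{vQ1215} one has $u(x,\bbT) = v\bigl(\mathbf{t}(x,\bbT)\bigr)$, while \eqref{deftT} gives $\partial_x = \partial_{t_0}$. Hence $\partial_x^k u(x,\bbT) = \bigl(\partial_{t_0}^k v\bigr)\bigl(\mathbf{t}(x,\bbT)\bigr)$ for every $k\geq 0$, so the jet variables on the generalized BGW side coincide with those on the Hodge side after relabeling $v_k\mapsto u_k$.

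For the jet representations I would simply substitute these identities into \eqref{HodgeBGWggeq1}. For $g=1$ one has $(-4)^{g-1}=1$, so $\F_1 = \cH_1\bigl(\mathbf{t}(x,\bbT)\bigr)$; plugging in \eqref{cH1exp} and the identities above yields \eqref{eqn:bgwf1-hodgejet} immediately. For $g\geq 2$, the jet representation \eqref{jethodge1219} gives $\cH_g = H_g\bigl(\partial_{t_0}v,\dots,\partial_{t_0}^{3g-2}v\bigr)$, whence $\F_g = (-4)^{g-1}H_g\bigl(\partial_x u,\dots,\partial_x^{3g-2}u\bigr)$, and one sets $F_g := (-4)^{g-1}H_g$. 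The property of being polynomial in $u_2,\dots,u_{3g-2}$ and rational in $u_1$ is inherited directly from the corresponding property of $H_g$, which establishes \eqref{eqn:bgwfg-hodgejet}.

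The heart of the matter is the loop equation. The crucial observation is that the two genus parameters are compatible: since $\epsilon = \hbar\sqrt{-4}$ in \eqref{mainid}, we have $\epsilon^{2g-2} = (-4\hbar^2)^{g-1} = (-4)^{g-1}\hbar^{2g-2}$, so the generating series $\Delta H := \sum_{g\geq 1}\epsilon^{2g-2}H_g$ and $\Delta F = \sum_{g\geq 1}\hbar^{2g-2}F_g$ are literally the same object once we impose $\epsilon^2 = -4\hbar^2$ and relabel $v_k\mapsto u_k$. I would therefore take the Dubrovin--Zhang type loop equation satisfied by $\Delta H$ (the one used in \cite{DLYZ2} to compute the $H_g$) and carry out this substitution. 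The spectral-curve function of that Hodge loop equation, built from $v_0$, becomes precisely $B = \sqrt{1-4e^{u_0}/\lambda}$ under the relabeling, and the dispersive terms carrying $\epsilon^2$ produce the stated $2\hbar^2$-factors after the replacement $\epsilon^2 = -4\hbar^2$. No uniqueness argument is needed here, since the equation for $\Delta F$ follows as a direct consequence of the one for $\Delta H$.

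The main obstacle will be the careful bookkeeping in this last step: one must pin down the exact form of the Hodge-side loop equation of \cite{DLYZ2}, in particular its spectral-curve function and its inhomogeneous term, verify that the superpotential matches $B$ under $u_0 = v_0$, and confirm that the constant terms $\tfrac{1}{8B^2}-\tfrac{1}{16B^4}$ and the quadratic-in-$\Delta F$ contributions emerge with the correct signs and coefficients from the substitution $\epsilon^2 = -4\hbar^2$ together with the overall normalization $(-4)^{g-1}$. Everything preceding that matching is a routine transcription.
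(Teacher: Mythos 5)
Your proposal follows essentially the same route as the paper: the paper likewise obtains the jet representations by combining the genus-by-genus identity $\F_g(x,\bbT)=(-4)^{g-1}\,\cH_g\bigl({\bf t}(x,\bbT)\bigr)$ with the known jet representation of $\cH_g$, the identification $u(x,\bbT)=v\bigl({\bf t}(x,\bbT)\bigr)$, and the fact $\p_x=\p_{t_0}$, and then derives the loop equation by directly translating the Dubrovin--Zhang type loop equation for $H_g$ from \cite{DLYZ2} under $F_g=(-4)^{g-1}H_g$ and $\e^2=-4\hbar^2$. The bookkeeping you defer (matching the spectral-curve function to $B$ and tracking signs) is precisely the part the paper also leaves implicit, so your argument is correct and not a different approach.
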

\noindent We note that there are also other algorithms for computing $H_g$ (cf.~\cite{DLYZ1,DY2}), 
and therefore for computing~$F_g=(-4)^{g-1}H_g$, $g\geq2$. 

\medskip

Now we consider a third application of the Hodge-BGW correspondence.
Recall that the Gromov--Witten free energy $\mathcal F^{\rm WK}({\bf t};\epsilon)$ of {\it a point} is defined as the logarithm of the partition function $Z_{\rm WK}({\bf t};\epsilon)$ (cf.~\eqref{zwkdef}), and it has the genus expansion:
\beq
\mathcal F^{\rm WK}({\bf t};\epsilon)
\=\sum_{g\geq 0}\, \epsilon^{2g-2}\,\mathcal F^{\rm WK}_{g}({\bf t}).
\eeq
It is known that for every $g\geq 1$, there exists a function of $(3g-2)$ variables 
$F^{\rm WK}_g(z_1,\cdots,z_{3g-2})$ such that 
\beq
\mathcal F^{\rm WK}_g({\bf t})\=F^{\rm WK}_{g}\bigg(\frac{\p v({\bf t})}{\p t_0},\cdots,\frac{\p^{3g-2} v({\bf t})}{\p t_0^{3g-2}}\bigg)\,,
\eeq
where $v({\bf t})=\frac{\p^2 \mathcal F^{\rm WK}_0({\bf t})}{\p t_0^2}$ (cf.~e.g.~\cite{DZ-norm,IZ,ZZ}). 
For $g=1$, the explicit expression~\cite{W} of $F^{\rm WK}_1$ is given by
\beq
F^{\rm WK}_1(z_1)\=\frac{1}{24}\,\log z_1\,.
\eeq
For every $g\geq 2$, $F^{\rm WK}_g(z_1,\cdots,z_{3g-2}) \in \mathbb{Q}\bigl[z_2,\dots,z_{3g-2},z_1^{-1}\big]$.
%such that 
%\beq
%\mathcal F^{\rm WK}_g({\bf t})\=F^{\rm WK}_{g}\bigg(\frac{\p y^{\rm WK}({\bf t})}{\p t_0},\cdots,\frac{\p^{3g-2} y^{\rm WK}({\bf t})}{\p t_0^{3g-2}}\bigg)\,,
%\eeq
Dubrovin and Zhang~\cite{DZ-norm} proved that for $g\geq 1$, the functions 
$F^{\rm WK}_g(z_1,\cdots,z_{3g-2})$ are uniquely determined by
\beq\label{eqn:wkdilaton}
\sum_{k=1}^{3g-2}\,k\,z_k\,\frac{\p F^{\rm WK}_g}{\p z_k}\=(2g-2)\,F^{\rm WK}_g \+ \frac{\delta_{g,1}}{24}\,,\qquad g\geq 1
\eeq
and the following {\it loop equation}~\cite{DZ-norm}:
\begin{align}
	&-\sum_{k\geq 0}\, \bigg(\p_1^k\Big(\frac{1}{P^2}\Big) \+\sum_{j=1}^{k}\,\binom{k}{j}\, 
	\p_1^{j-1}\Big(\frac{1}{P}\Big)\, \p_1^{k+1-j}\Big(\frac{1}{P}\Big)\bigg)\, \frac{\p \Delta F^{\rm WK}}{\p z_k}\nn\\
	&\+\frac{\epsilon^2}{2}\,\sum_{k,l\geq 0}\,\p_1^{k+1}\Big(\frac{1}{P}\Big)\, \p_1^{l+1}\Big(\frac{1}{P}\Big)\,
	\bigg(\frac{\p^2 \Delta F^{\rm WK}}{\p z_k\p z_l}\+\frac{\p \Delta F^{\rm WK}}{\p z_k}\frac{\p \Delta F^{\rm WK}}{\p z_l}\bigg)\nn\\
	&\+\frac{\epsilon^2}{16}\,\p_1^{k+2}\Big(\frac{1}{P^4}\Big)\,\frac{\p \Delta F^{\rm WK}}{\p z_k} \+\frac{1}{16}\,\frac{1}{P^4}=0\,.\label{eqn:loopdz}
\end{align}
Here $\p_1=\sum_{j\geq 0}z_{j+1}\frac{\p }{\p z_{j}}$, $P\=\sqrt{\lambda-z_0}$,
and 
\beq
\Delta F^{\rm WK}\=\sum_{g\geq 1}\,\epsilon^{2g-2} \, F^{\rm WK}_g(z_1,\cdots,z_{3g-2})\,.
\eeq
The following proposition was conjectured by Okuyama and Sakai in~\cite{OS},
and now we give a proof as an application of Theorem~\ref{main}.
%Equalities~\eqref{eqn:bgwf1-wkjet},~\eqref{eqn:kw-bgwjet} in Proposition~\ref{prop:bgw-wkjet} were conjectured by Okuyama and Sakai in~\cite{OS}. 
\begin{prop}\label{prop:bgw-wkjet}
	Let $y=y(x,{\bf T})\:=Q(x,{\bf T})^2=\frac{\p^2 \F_0(x,{\bf T})}{\p T_1^2}$. For every $g\geq 1$, the genus g part of the generalized BGW free energy satisfy the identity: for $g=1$,
	\beq\label{eqn:bgwf1-wkjet}
	\F_{1}(x,{\bf T})\=\frac{1}{24}\, \log\bigg(\frac{\p y(x,{\bf T})}{\p T_1}\bigg) \,-\, \frac{\log 2}{24}\,,
	\eeq
	and for $g\geq 2$, 
	\beq\label{eqn:kw-bgwjet}
	\F_{g}(x,{\bf T})\=F^{\rm WK}_g\bigg(\frac{\p y(x,{\bf T})}{\p T_1},\cdots,\frac{\p^{3g-2} y(x,{\bf T})}{\p T_1^{3g-2}}\bigg).
	\eeq
\end{prop}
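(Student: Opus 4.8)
The plan is to deduce Proposition~\ref{prop:bgw-wkjet} from the universality of the Dubrovin--Zhang jet representation for tau-functions of the KdV hierarchy~\cite{DZ-norm}. By Corollary~\ref{corkdv}, for every fixed~$x$ the series $Z(x,\bbT;\hbar)$ is a tau-function of the KdV hierarchy~\eqref{KdVThbar} in the times $T_1,T_3,T_5,\dots$, with $T_1$ as spatial variable and KdV field $u=\hbar^2\,\p_{T_1}^2\F$ (the $\bbT$-independent factor $e^{B(x,\hbar)}$ does not affect this). Under the identification $t_a\leftrightarrow T_{2a+1}$, $\e\leftrightarrow\hbar$, this is exactly the hierarchy~\eqref{kdv1214} underlying $Z_{\rm WK}$, and by~\eqref{Q1} its genus-zero two-point function is $\p_{T_1}^2\F_0=Q^2=y$.

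First I would verify that $y$ solves the dispersionless KdV hierarchy in the same normalization as the Witten--Kontsevich $v({\bf t})$, namely $\p y/\p T_{2a+1}=\frac{y^{a}}{a!}\,\p y/\p T_1$. Differentiating the Lagrange-type relation~\eqref{euler-lagrange-Q} implicitly gives $\p Q/\p T_{2a+1}=\frac{Q^{2a}}{a!}\,\p Q/\p T_1$, and squaring (using $y=Q^2$) yields the desired flows. This matches $v_{t_a}=\frac{v^a}{a!}\,v_{t_0}$, so the KdV structure governing $\F$ and the one governing the point Gromov--Witten free energy agree not only as hierarchies but in their dispersionless normalization.

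The main step is then to invoke the Dubrovin--Zhang theorem: the genus-$g$ free energy of any KdV tau-function equals the universal function $F^{\rm WK}_g$ evaluated at the jets of its genus-zero two-point function, where $F^{\rm WK}_g$ is the unique solution of the loop equation~\eqref{eqn:loopdz} together with the dilaton constraint~\eqref{eqn:wkdilaton}. Because the hierarchy and the normalization of $y$ coincide with those of $Z_{\rm WK}$, the same functions $F^{\rm WK}_g$ control $\F_g$. For $g\geq2$ the dilaton equation~\eqref{eqn:wkdilaton} forces quasi-homogeneity of the nonzero degree $2g-2$, so there is no additive constant and~\eqref{eqn:kw-bgwjet} follows immediately. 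For $g=1$ the universal formula reads $\F_1=\frac1{24}\log(\p_{T_1}y)+c$ with an undetermined constant~$c$; I would fix it by setting $\bbT={\bf 0}$. There $Q(x,{\bf 0})=-x/2$ gives $\p_{T_1}y|_{\bbT={\bf 0}}=x^2/2$, while~\eqref{genusexpansion}--\eqref{eqn:bernoulli} give $\F_1(x,{\bf 0})=\tfrac1{12}\log(-x/2)$; comparing the two yields $c=-\tfrac{\log 2}{24}$, which is precisely~\eqref{eqn:bgwf1-wkjet}.

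The crux of the argument is the clean application of this universality. Concretely, one must confirm that the loop equation satisfied by the $y$-jet representation of $\F_g$ is identical to~\eqref{eqn:loopdz}; this is equivalent to checking that the loop equation of Proposition~\ref{loopforBGW1219}, written in the jets of $u=-2\log Q$ in the variable~$x$, transforms into~\eqref{eqn:loopdz} under the change of spatial variable from~$x$ to~$T_1$ together with the substitution $u=-\log y$. Carrying out this change of variables explicitly, or bypassing it via the uniqueness part of~\cite{DZ-norm} after verifying that $y$ provides a non-degenerate genus-zero input with $\p_{T_1}y\neq0$ so that the Laurent-in-$z_1$ functions $F^{\rm WK}_g$ are admissible, is the technical heart of the proof.
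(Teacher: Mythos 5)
Your proposal is correct and follows essentially the same route as the paper: both hinge on the dispersionless flows $\p y/\p T_{2a+1}=\frac{y^a}{a!}\,\p y/\p T_1$ for $y=Q^2$, the uniqueness of the solution to the loop equation~\eqref{eqn:loopdz} together with the dilaton constraint~\eqref{eqn:wkdilaton}, and fixing the genus-one constant by evaluating at ${\bf T}={\bf 0}$ (your computation $\p_{T_1}y|_{{\bf T}={\bf 0}}=x^2/2$ and $c=-\frac{\log 2}{24}$ matches). The only structural difference is that the paper first passes from the $x$-jets of $u$ in Proposition~\ref{loopforBGW1219} to the $T_1$-jets of $y$ via~\eqref{eqn:jetchange}, obtaining a priori a representation $\bar F_g(z_0,z_1,\dots,z_{3g-2})$ that may depend on $y$ itself, and then derives~\eqref{eqn:loopdz} from the Virasoro constraints to identify $\bar F_g$ with $F_g^{\rm WK}$ --- exactly the verification you correctly single out as the technical heart.
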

\begin{proof}
	On one hand, noticing~\eqref{flow-q} and~$u(x,{\bf T})=-\log(y(x,{\bf T}))$, we have 
	\beq\label{eqn:jetchange}
	u_x \= -\frac{y_x}{y}\,,\qquad y_x \= -\frac{y_{T_1}}{2\,y^{1/2}}\, .
	\eeq
	For $g=1$, the equality~\eqref{eqn:bgwf1-wkjet} then follows from~\eqref{eqn:bgwf1-hodgejet} and~\eqref{eqn:jetchange}.
	For $g\geq 2$, using~\eqref{eqn:bgwfg-hodgejet}, \eqref{eqn:dilatonHg}, and using~\eqref{eqn:jetchange} iteratively, we know that there exists a function $\bar F_g(z_0,z_1,\cdots,z_{3g-2})$ satisfying~\eqref{eqn:wkdilaton} such that
	\beq
	\F_{g}(x,{\bf T})\=\bar F_g\bigg(y(x,{\bf T}), \frac{\p y(x,{\bf T})}{\p T_1},\cdots,\frac{\p^{3g-2} y(x,{\bf T})}{\p T_1^{3g-2}}\bigg).
	\eeq
	 On the other hand, it follows immediately from~\eqref{flow-q} that
	\beq\label{flow-t1}
	\frac{\p y}{\p T_{2a+1}}\=\frac{y^{a}}{a!}\frac{\p y}{\p T_1}\,, \qquad a\geq 0\,.
	\eeq
	By using~\eqref{dubrovin-f0},\,\eqref{Q1}, \eqref{flow-t1},
	as well as the Virasoro constraints~\eqref{eqn:vira-gBGW-x}, one can derive the loop equation~\eqref{eqn:loopdz} using the method in~\cite{DZ-norm}.
%	Observe that this equation coincides with the Dubrovin-Zhang loop equation for the generating series of Gromov-Witten invariants of a point~\cite{DZ-norm}.
	By the uniqueness of the solution to~\eqref{eqn:wkdilaton} and the Dubrovin-Zhang loop equation~\eqref{eqn:loopdz}, we see $\bar F_g(z,z_1,\cdots,z_{3g-2})=F^{\rm WK}_g(z_1,\cdots,z_{3g-2})$. This finishes the proof of~\eqref{eqn:kw-bgwjet}, and the proposition is proved.
\end{proof}

We note that one can also apply the theories of KdV tau-functions~\cite{DZ-norm} (cf.~also~\cite{Dickey, DYZ}) 
to prove Proposition~\ref{prop:bgw-wkjet}. For this purpose, the simple but important observation is the following:
for $g\geq2$, the power series 
\[F_g^{\rm WK} \biggl(\frac{\p y(x,T_1,{\bf 0})}{\p T_1},\cdots,\frac{\p^{3g-2} y(x,T_1,{\bf 0})}{\p T_1^{3g-2}}\biggr)\]
does not depend on~$T_1$, and so 
\beq
y(x,T_1,{\bf 0}) \+ \sum_{g\geq1} \, \hbar^{2g} \, 
\frac{\p^{2} F_g^{\rm WK} \bigl(\frac{\p y(x,T_1,{\bf 0})}{\p T_1},\cdots,\frac{\p^{3g-2} y(x,T_1,{\bf 0})}{\p T_1^{3g-2}}\bigr)}{\p T_1 \p T_1} 
 \= \frac{x^2}4 \, \frac{1}{(1-T_1)^2} \+ \frac{\hbar^2}{8} \, \frac{1}{(1-T_1)^2} \,, 
\eeq
which coincides with~\eqref{initialugBGW202218}. Here, 
$y(x,T_1,{\bf 0}) = \frac{x^2}{4 \, (1-T_1)^2}$.

We also remark that using 
\eqref{eqn:bgwf1-wkjet}--\eqref{eqn:kw-bgwjet}, \eqref{eqn:bgwfg-hodgejet}, 
\eqref{jethodge1219} and~\eqref{HodgeBGWggeq1} 
one finds that $H_g$, $g\geq1$, are 
related to $F_g^{\rm WK}$ under the substitution of the invertible map between the jet variables $z_1,z_2,\dots$ and 
$u_1,u_2,\dots$
 induced by~\eqref{eqn:jetchange}; 
an equivalent relationship with this was obtained in a joint work by Don Zagier and the first-named author of the present 
paper by a different method.

Following~\cite{IZ,Zhou1d} (cf.~also~\cite{GJV,OS,ZZ}), introduce the Itzykson--Zuber variables by
\beq\label{IZrec2022}
I_1\=1-\frac{1}{z_1}\,,\qquad
I_{\ell+1}\=\frac{1}{z_1}\,\p_1(I_{\ell})\quad (\ell\geq 1)\,.
\eeq
Clearly, $I_{k}\in \mathbb Q\big[z_2,\cdots,z_k,z_1^{-1}\big]$, $k\geq 1$. For example, 
$I_2=z_2\,z_1^{-3}$, $I_3=z_3\,z_1^{-4}-3\,z_2^2\,z_1^{-5}$.
It follows from~\eqref{euler-lagrange-Q}, \eqref{eqn:bgwf1-wkjet}, \eqref{eqn:kw-bgwjet}, \eqref{IZrec2022}
that the following identities hold:
\begin{align}
	&\F_{1}(x,{\bf T})\=\frac{1}{24}\, \log\bigg(\frac{1}{1-I_1(x,{\bf T})}\bigg) \,-\, \frac{\log 2}{24}\,,\\
	&\F_{g}(x,{\bf T})\=\frac{1}{(1-I_1(x,{\bf T}))^{2g-2}}\,\F^{\rm WK}_g 
	\bigg(0,0,\frac{I_2(x,{\bf T})}{1-I_1(x,{\bf T})},\cdots,\frac{I_{3g-2}(x,{\bf T})}{1-I_1(x,{\bf T})},{\bf 0}\bigg)\,,\label{eqn:bgw-I}
\end{align}
where $1/(1-I_1(x,{\bf T}))=\p_{T_1}(y(x,{\bf T}))\in\mathbb C[[x+2]][[{\bf T}]]$, and
\beq
I_{k}(x,{\bf T})\=\delta_{k,1}\,-\,x\,\frac{(-1)^k\,(2k-1)!!}{2^{k+1}\,Q(x,{\bf T})^{2k+1}}
\+\sum_{a\geq 0}\,\frac{Q(x,{\bf T})^{2a}}{a!}\,T_{2a+2k+1}\in\mathbb C[[x+2]][[{\bf T}]]\,,
\quad k\geq 1\,.
\eeq
In particular, by taking ${\bf T}={\bf 0}$ in~\eqref{eqn:bgw-I}, we obtain that 
\beq
\frac{x^{4g-4}}{2^{2g-2}}\,\F^{\rm WK}_g \bigg(0,0,\frac{2^1 3!!}{x^{2}},-\frac{2^25!!}{x^{4}},\cdots,(-1)^{3g-2}\frac{2^{3g-3}(6g-5)!!}{x^{6g-6}} , {\bf 0}\bigg)
\=\frac{(-1)^{g}\,2^{g-1}\,B_{2g}}{2g\,(2g-2)\,x^{2g-2}} \,.
\eeq
Using this identity and applying the derivations similar to those for~\eqref{eqn:t-kappa} (cf.~\cite{KMZ, LX, MZ}), we obtain that
\beq\label{eqn:kw-bernu}
\int_{\overline{\mathcal M}_{g,0}}\,e^{\sum_{d\geq 1}\bar s_d\,\kappa_d}
\=\frac{(-1)^{g}\,B_{2g}}{2g\,(2g-2)}\,,\qquad g\geq 2\,,
\eeq
where $\bar s_d$, $d\geq1$, are numbers determined by
\beq
\exp\bigg(-\sum_{d\geq 1}\bar s_d\,z^d\bigg)\=\sum_{d\geq 0}\,(-1)^{d}\,(2d+1)!!\,z^d\,.
\eeq
We note that the cases with $g=2,\dots,9$ of identity~\eqref{eqn:kw-bernu} were proved by Kazarian and Norbury~\cite{KN}; 
for arbitrary $g\geq 2$, this identity was also expected in~\cite{KN} to hold, 
and is now proved in this paper.

\medskip

\end{appendix}

\newpage

\noindent Di Yang

\noindent School of Mathematical Sciences, University of Science and Technology of China,

\noindent Hefei 230026, P.R. China 

\noindent diyang@ustc.edu.cn

\medskip
\medskip

\noindent Qingsheng Zhang

\noindent School of Mathematical Sciences, Peking University, 

\noindent Beijing 100871, P.R. China 

\noindent zqs@math.pku.edu.cn

\end{document}